\documentclass[12pt, reqno]{amsart}

\usepackage{amssymb,latexsym,amsmath,amsfonts}
\usepackage{enumitem}
\usepackage{mathrsfs}
\usepackage{graphicx}
\usepackage{hyperref}
\usepackage[usenames]{color}


\DeclareFontFamily{OT1}{rsfs}{}
\DeclareFontShape{OT1}{rsfs}{n}{it}{<-> rsfs10}{}
\DeclareMathAlphabet{\mathscr}{OT1}{rsfs}{n}{it}

\addtolength{\textwidth}{3 truecm}
\addtolength{\textheight}{1 truecm}
\setlength{\voffset}{-.6 truecm}
\setlength{\hoffset}{-1.3 truecm}

\numberwithin{equation}{section}

\theoremstyle{definition}
\newtheorem{definition}{Definition}[section]

\theoremstyle{remark}
\newtheorem{remark}[definition]{Remark}

\theoremstyle{plain}
\newtheorem{theorem}[definition]{Theorem}
\newtheorem{result}[definition]{Result}
\newtheorem{lemma}[definition]{Lemma}
\newtheorem{proposition}[definition]{Proposition}
\newtheorem{example}[definition]{Example}
\newtheorem{corollary}[definition]{Corollary}

\definecolor{DPurple}{rgb}{0.76,0.2,0.69}



\newcommand{\J}{{\rm J}}


\newcommand{\f}{F}

\newcommand\decosum[1]{\sum\nolimits_{{#1}}^{'}}
\newcommand{\countsum}{\sideset{}{'}\sum}
\newcommand{\supp}{{\rm supp}}

\newcommand{\D}{D}
\newcommand{\dg}{\mathrm{d}}

\newcommand{\sph}{\widehat{\mathbb{C}}}
\newcommand{\C}{\mathbb{C}} 
\newcommand{\R}{\mathbb{R}}
\newcommand{\Z}{\mathbb{Z}}
\newcommand{\N}{\mathbb{N}}
\newcommand{\Q}{\mathbb{Q}}

\begin{document}

\title[Ergodicity in the dynamics of holomorphic correspondences]{Ergodicity 
in the dynamics of holomorphic correspondences}

\author{Mayuresh Londhe}
\address{Department of Mathematics, Indiana University, Bloomington, Indiana 47405, USA}
\email{mmlondhe@iu.edu}

\begin{abstract}
This paper studies ergodic properties of certain measures arising in the dynamics of holomorphic
correspondences.
These measures, in general, are not invariant in the classical sense of ergodic theory.
We define a notion of ergodicity, and prove a version of Birkhoff's ergodic theorem
in this setting. 
We also show the existence of ergodic measures when a holomorphic 
correspondence is defined 
on a compact complex manifold.
Lastly, we give an explicit class of dynamically interesting measures that are ergodic as in our definition.
\end{abstract}

\keywords{correspondences, ergodicity, invariant measures, equidistribution}
\subjclass[2020]{Primary: 37F80, 37A30; Secondary: 32H50, 37F05}

\maketitle

\vspace{-0.25cm}
\section{Introduction and statement of main results}\label{S:intro}

In this paper, we study ergodic properties of certain measures arising naturally 
in iterative holomorphic dynamics beyond the classical framework of maps.
Loosely speaking, ergodicity expresses the idea that a \emph{typical} point of a dynamical system
will eventually visit all parts of the space that 
the system moves in, in a uniform and random way.
The measures under consideration, in general,
are not invariant in the classical sense of ergodic theory.
Thus the classical ergodic theorems do not hold as it is in this setting.
The purpose of this article is to define a notion of ergodicity and to prove an
ergodic theorem for such measures.
\smallskip

In \cite{brolin:isuirm65}, Brolin constructed a probability measure supported on the Julia set of a polynomial 
of degree at least 2. 
This measure describes the distribution of the preimages of a generic point under iteration of the polynomial.
Freire, Lopes and Ma{\~n}{\'e} \cite{FLM:imrm83}, and Lyubich \cite{ljubich:eprers83}, independently generalised 
this to all
rational maps of degree at least 2 on the Riemann sphere. These measures are invariant and ergodic 
in the sense of ergodic theory\,---\,see, for instance, \cite{Walters:aiet82} for the basics of ergodic theory.
We refer the reader to \cite{DinhSibony:discv10} and the references therein for various 
generalisations of the above results,
and for dynamically interesting properties possessed by such measures.
Also, see \cite[Chapter~13]{Ben:donav19} and the references therein for analogous measures in
non-Archimedean setting.
Some of these constructions have been further extended to certain classes of multi-valued maps.
But, the resulting measures are no longer invariant. Thus
the classical ergodic theorems do not apply in this setting.
However, it turns out that these measures are invariant in a certain sense.
We now proceed to formally define the sense in which these measures are invariant and the 
dynamical systems in which they arise.
\smallskip

Let $X$ be a (not necessarily compact) complex manifold of dimension $k$.
We shall always assume that manifolds are connected.
A \emph{holomorphic $k$-chain} is a formal linear combination of the form
\begin{equation}\label{E:corresp}
  \Gamma= \sum_{1 \leq i \leq N} m_i\Gamma_i,
\end{equation} 
where the $m_i$'s are positive integers and $\Gamma_i$'s are distinct irreducible complex subvarieties of
$X \times X$ of dimension $k$. Let $\pi_1$ and $\pi_2$ denote the projection onto the first and second
coordinates respectively
and let $|\Gamma|:= \cup_{i=1}^N\Gamma_i$.
We call $\Gamma$ a \emph{holomorphic correspondence on $X$} if 
$\pi_1|_{\Gamma_i}$ and $\pi_2|_{\Gamma_i}$ are proper, surjective and finite-to-one 
maps for each $1 \leq i \leq N$.
If $A$ is a subset of $X$, then we define the following set-valued maps
\[
  \f_{\Gamma}(A):= \pi_2 (\pi_1^{-1} (A) \cap |\Gamma|) {\rm{\ and \ }} \f_{\Gamma}^{\dagger}(A)
  := \pi_1 (\pi_2^{-1} (A) \cap |\Gamma|).
\]
For convenience, we denote $\f_{\Gamma}(\{x\})$ and $\f_{\Gamma}^{\dagger}(\{x\})$ by $\f_{\Gamma}(x)$
and $\f_{\Gamma}^{\dagger}(x)$ respectively. 
When there is no scope for confusion, we shall, for simplicity of notation, denote $F_{\Gamma}$ by
$F$. Also, we shall \emph{refer to the correspondence $\Gamma$ underlying $F$ also as $F$}.
\smallskip

In Section~\ref{S:def}, the pullback of a Borel measure $\mu$ by $\f$, denoted by $\f^*\mu$,
is introduced in detail. This operation plays a key role in defining a notion of invariance for measures. 
We say that a Borel probability measure $\mu$ is \emph{$\f^*$-invariant} if 
$\f^*\mu= \dg \cdot \mu$,
where $\dg$ is the topological degree of $\f$\,---\,see
Section~\ref{S:def} for a definition.
Given an $\f^*$-invariant measure $\mu$,
a Borel subset $B$ of $X$ is said to be \emph{almost invariant} with respect to $\f$ and $\mu$
if there exists a Borel set $B' \subseteq B$ such that $\f^{\dagger}(B') \subseteq B$
and $\mu(B')=\mu(B)$. We now define a notion of ergodicity for $\f^*$-invariant measures.

\begin{definition}\label{D:ergodic}
Let $\f$ be a holomorphic correspondence on a complex manifold $X$.
We say that an $\f^*$-invariant Borel probability measure $\mu$ on $X$ is \emph{ergodic} 
if for any Borel set $B$ that is almost invariant with respect to $\f$ and $\mu$,
either $\mu(B)=0$ or $\mu(B)=1$.
\end{definition}

\smallskip

Two holomorphic correspondences on $X$ can be composed with each other\,---\,see
Section~\ref{S:def} for the definition. Keeping in mind the above notational comments,
we shall write $\f^n$ to denote the $n$-fold composition
of a holomorphic correspondence $\f$ on $X$. Thus $\f$ gives rise to a dynamical system on $X$.
We now state the first main result of this paper, which is an analogue of the classical Birkhoff's
Ergodic Theorem in the setting of the above dynamical system.
\begin{theorem}\label{T:ergodic}
Let $\f$ be a holomorphic correspondence of topological degree $\dg$ on a complex manifold
$X$. Suppose there exists
a Borel probability measure $\mu$ on $X$ such that $\mu$ is 
$\f^*$-invariant, i.e., it satisfies $\f^*\mu= \dg \cdot \mu$.
Then, for $\phi \in L^1(\mu)$,
\[
\frac{1}{n}\sum_{j=0}^{n-1}
\countsum_{y \in {F^j}^{\dagger}(x)} \frac{\phi(y)}{\dg^j}
\]
converges $\mu$-almost everywhere to a function $\Phi \in L^1(\mu)$ such that
$\int_X \Phi\,d\mu=\int_X \phi\,d\mu$.
Moreover, if $\mu$ is ergodic as in Definition~\ref{D:ergodic} then, we have
\[
\Phi(x)=\int_X \phi\,d\mu \quad \text{for $\mu$-almost every $x\in X$}.
\]
\end{theorem}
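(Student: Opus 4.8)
The plan is to reduce the statement to the classical Birkhoff Ergodic Theorem on a suitable probability space, built so that the transfer operator associated to $\f^{\dagger}$ becomes an honest measure-preserving transformation. Let me think about how to set this up.

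First, consider the averaging operator $L$ defined on $L^1(\mu)$ by
\[
(L\phi)(x) = \countsum_{y\in F^{\dagger}(x)}\frac{\phi(y)}{\dg}.
\]
The key algebraic fact I would establish (or extract from Section 2) is that $L$ is a positive linear operator with $L\mathbf{1}=\mathbf{1}$, and — crucially — that it is a contraction on $L^1(\mu)$ that preserves the integral, i.e. $\int_X L\phi\, d\mu = \int_X \phi\, d\mu$. This last identity is exactly a restatement of $\f^*$-invariance $\f^*\mu = \dg\cdot\mu$: pulling $\phi$ back by $\f$ weighted by $1/\dg$ and integrating against $\mu$ equals integrating $\phi$ against $\mu$. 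The sum appearing in the theorem is then $\frac1n\sum_{j=0}^{n-1} L^j\phi$, so the statement is precisely the assertion that the Cesàro averages of the iterates of the Markov operator $L$ converge $\mu$-a.e.

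Second, I would invoke the natural-extension / Rokhlin-tower construction: an integral-preserving Markov operator $L$ on $L^1(\mu)$ with $L\mathbf1 = \mathbf1$ arises from a stationary Markov chain, and one can realize it via a genuine measure-preserving map $T$ on an extended space $(\widehat X, \widehat\mu)$ with a factor map $\pi:\widehat X\to X$, $\pi_*\widehat\mu = \mu$, such that $L\phi\circ\pi = E[\phi\circ\pi\circ T \mid \pi^{-1}\mathcal B]$, or more directly such that $\frac1n\sum_{j<n} (L^j\phi)\circ\pi = E[\frac1n\sum_{j<n}(\phi\circ\pi)\circ T^j \mid \pi^{-1}\mathcal B]$. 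Apply the classical Birkhoff theorem to $\phi\circ\pi \in L^1(\widehat\mu)$ on $(\widehat X, T, \widehat\mu)$ to get a.e. convergence upstairs to a $T$-invariant $\widehat\Phi$ with $\int\widehat\Phi\,d\widehat\mu = \int\phi\,d\mu$; then push down. The pushed-down limit $\Phi$ satisfies $\int_X\Phi\,d\mu = \int_X\phi\,d\mu$, $\Phi\in L^1(\mu)$, and $L\Phi = \Phi$ a.e. For the convergence to descend to $\mu$-a.e. $x\in X$ (not just $\widehat\mu$-a.e. point), one uses that the averages downstairs $\frac1n\sum_{j<n}L^j\phi(x)$ are the conditional expectation upstairs of the averages $\frac1n\sum_{j<n}(\phi\circ\pi)(T^j\,\cdot\,)$, which converge in $L^1(\widehat\mu)$; convergence of conditional expectations plus the maximal inequality for $L$ (Hopf's maximal ergodic theorem applied to the positive contraction $L$) gives the $\mu$-a.e. statement directly on $X$ without ever leaving $L^1(\mu)$. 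Honestly, I would run the whole argument via the \emph{Chacon–Ornstein} (or Hopf) ergodic theorem for the positive $L^1$-contraction $L$, which yields a.e. convergence of $\frac1n\sum_{j<n}L^j\phi$ on $X$ intrinsically, with limit $\Phi$ characterized by $L\Phi=\Phi$; this avoids the natural-extension machinery altogether and is the cleaner route.

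Third, for the ergodic case: suppose $\mu$ is ergodic in the sense of Definition 1.1. I must show any $L$-invariant $\Phi\in L^1(\mu)$ is $\mu$-a.e.\ constant. Given $L\Phi=\Phi$, for any $c\in\R$ consider $B = \{x : \Phi(x) > c\}$. I claim $B$ is almost invariant with respect to $\f$ and $\mu$. The point is that $L\Phi=\Phi$ together with positivity and $L\mathbf 1 = \mathbf 1$ forces, for the super-level sets, that up to $\mu$-null modification $\f^{\dagger}$ maps (a full-measure subset of) $B$ into $B$: if $x\in B'$ for an appropriate $B'\subseteq B$ with $\mu(B')=\mu(B)$, every $y\in F^{\dagger}(x)$ lies in $B$, because otherwise averaging $\Phi$ over $F^{\dagger}(x)$ would pull the value strictly below $\Phi(x)$ on a positive-measure set, contradicting $L\Phi=\Phi$. (This is the standard "invariant functions have invariant level sets" argument, adapted to the multivalued/transfer-operator setting; the weighted-counting structure of $F^{\dagger}$ and the identity $L\mathbf 1=\mathbf 1$ are what make it go through.) Then Definition 1.1 gives $\mu(B)\in\{0,1\}$ for every $c$, whence $\Phi$ is a.e.\ constant, and the constant must be $\int_X\Phi\,d\mu = \int_X\phi\,d\mu$.

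The main obstacle I anticipate is the passage in the last paragraph from "$L\Phi = \Phi$" to "the super-level sets of $\Phi$ are almost invariant in the precise sense of Definition 1.1" — i.e.\ producing the Borel set $B'\subseteq B$ with $\f^{\dagger}(B')\subseteq B$ and $\mu(B')=\mu(B)$. In the classical single-map case this is immediate because $T^{-1}B$ and $B$ agree a.e.; here $F^{\dagger}$ is genuinely multivalued and the "invariance" is only visible through the averaging operator $L$, so one has to argue carefully that the exceptional set where some branch $y\in F^{\dagger}(x)$ escapes $B$ is $\mu$-null, using the $\f^*$-invariance to control how $L$ interacts with $\mu$ on that exceptional set. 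A secondary technical point is ensuring the maximal inequality / Hopf theorem is genuinely available for $L$ as a positive $L^1(\mu)$-contraction — this requires checking $\|L\phi\|_{L^1(\mu)}\le\|\phi\|_{L^1(\mu)}$, which again is exactly $\f^*$-invariance applied to $|\phi|$, so it should be recorded carefully in Section 2 and cited here.
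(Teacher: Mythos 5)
Your identification of the problem as the pointwise ergodic theorem for the Markov operator $L\phi(x)=\decosum{y\in \f^{\dagger}(x)}\phi(y)/\dg$ is exactly the paper's framework: the paper introduces the same operator $U_\f$ in Section~5, checks that it is a positive $L^1(\mu)$-contraction with $U_\f\mathbf{1}=\mathbf{1}$ preserving the integral, and quotes the Hopf maximal inequality for such operators (Result~5.3, from Walters). The difference is that you propose to black-box the full Chacon--Ornstein/Dunford--Schwartz theorem (or a natural extension), whereas the paper rebuilds the pointwise statement by hand in the classical Birkhoff style: it sets $\phi'=\liminf$, $\phi''=\limsup$ of the averages, shows both are $L$-invariant, and kills $\{\phi'<\beta<\alpha<\phi''\}$ with the maximal inequality applied to restrictions of $\mu$ to almost invariant sets; the identity $\int\Phi\,d\mu=\int\phi\,d\mu$ is then obtained by a partition argument rather than from $L^1$-convergence. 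Your route is legitimate and arguably shorter for the first half of the theorem (a.e.\ convergence, $\Phi\in L^1$, $\int\Phi=\int\phi$), provided you justify $L^1$-convergence of the averages (e.g.\ by interpolating to $L^2$) to get the integral identity.

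There is, however, a genuine gap in the only step that is really specific to correspondences: the claim that the super-level sets $B=\{\Phi>c\}$ of an $L$-invariant function are almost invariant in the sense of Definition~\ref{D:ergodic}. Your justification --- that for a.e.\ $x\in B$ every $y\in\f^{\dagger}(x)$ must lie in $B$, ``because otherwise averaging $\Phi$ over $\f^{\dagger}(x)$ would pull the value strictly below $\Phi(x)$'' --- is false as a pointwise statement: $\f^{\dagger}(x)$ consists of $\dg$ points, and one branch with $\Phi(y)\le c$ can be compensated by another branch with $\Phi$ large, so $L\Phi(x)=\Phi(x)$ gives no pointwise contradiction. The correct argument (the paper's Lemma~\ref{L:imp}) is necessarily integrated: one plugs $\Phi\cdot\chi_B$ into the invariance identity \eqref{E:inv_int}, splits the domain into $C_1=\{x\in B:\f^{\dagger}(x)\cap B^c\neq\emptyset\}$ and $C_2=\{x\in B^c:\f^{\dagger}(x)\cap B\neq\emptyset\}$, and after cancellation obtains an equality between an integral that is $\le 0$ (branches landing in $B^c$, where $\Phi-c\le 0$) and one that is $\ge 0$ and strictly positive if $\mu(C_2)>0$; this forces $\mu(C_2)=0$, i.e.\ $B^c$ is almost invariant, and then $B$ is almost invariant by the complement lemma (Lemma~\ref{L:alminv_complement}), which itself requires a separate argument since complements of forward-invariant sets are not automatically invariant for multivalued maps. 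You correctly flag this step as the main obstacle, but the mechanism you propose for overcoming it would not work; the sign-comparison of the two exceptional sets is the missing idea.
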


\noindent{In Theorem~\ref{T:ergodic}, the notation ${\sum\nolimits}'$ denotes the sum over $y$'s, repeated
with multiplicity. 
\begin{remark}
Since an $\f_*$-invariant measure is also an ${(\f^{\dagger})}^*$-invariant measure,
Theorem~\ref{T:ergodic} also holds for $\f_*$-invariant measures, 
with ${\f^j}^{\dagger}$ replaced by $\f^j$ in the ergodic sums.
Note that whenever $\f$ is a holomorphic map (i.e., the correspondence defined by the graph of 
a holomorphic map), the $\f_*$-invariance of $\mu$ in the above sense coincides with the invariance
of $\mu$ in the sense of ergodic theory for measurable maps. Additionally, in general, $\f^*$-invariance
does not imply $\f_*$-invariance unlike in the case of holomorphic maps.
\end{remark}

A few comments are in order for the above notion of ergodicity.
If $B$ is a subset of $X$
such that $\f^{\dagger}(B) \subseteq B$,
the complement $B^c$ of $B$ may not satisfy $\f^{\dagger}(B^c) \subseteq B^c$
as in the case of maps.
In Section~\ref{S:existence}, we shall see that if a Borel subset $B$ is almost invariant 
with respect to $\f$ and $\mu$, then
$B^c$ is also almost invariant with respect to $\f$ and $\mu$. However,
for such Borel sets $B$,
the symmetric difference $B \triangle \f^{\dagger}(B)$ may not satisfy $\mu (B \triangle \f^{\dagger}(B))=0$
as in the case of maps\,---\,see Example~\ref{Ex:sym_diff}.
Theorem~\ref{T:ergodic} immediately raises a question: do there exist $\f^*$-invariant measures
for correspondences that are not maps?
In Section~\ref{S:existence}, we show that an $\f^*$-invariant measure always exists for a
holomorphic correspondence $\f$ defined on a compact complex manifold.
In fact, we show that an $\f^*$-invariant ergodic measure exists for such correspondences.
The following result of Dinh--Sibony
gives a class of holomorphic correspondences admitting dynamically interesting
measures that are $\f^*$-invariant:

\begin{result}[Dinh--Sibony, \cite{DinhSibony:dvtma06}]\label{R:DS}
Let $(X, \omega)$ be a compact K\"{a}hler manifold of dimension $k$.
Let $\f$ be a meromorphic correspondence of topological degree $\dg$ on
$(X,\omega)$. Suppose that the dynamical degree of order $k-1$, denoted $d_{k-1}$,
satisfies $\dg_{k-1} < \dg$. Then, the measures $\dg^{-n} {(\f^n)}^* {\omega}^k$
($\omega$ normalised so that $\int_X\omega^k=1$)
converges to a Borel probability measure $\mu_F$ as $n \to \infty$. Moreover, $\mu_F$ does not put any mass on pluripolar sets
and $\mu_F$ is $\f^*$-invariant.
\end{result}

When $\f$ is a rational map on the Riemann sphere, the measure $\mu_\f$ in Result~\ref{R:DS} 
is the measure constructed in \cite{brolin:isuirm65, FLM:imrm83, ljubich:eprers83}.
We shall call the measure $\mu_F$ given by Result~\ref{R:DS} 
the \emph{Dinh--Sibony measure of} $F$\,---\,see Section~\ref{S:def} for more details.
Here, we would like to mention few more classes of correspondences for which dynamically interesting invariant
($\f^*$-invariant or $\f_*$-invariant)
measures exist.
In \cite{Dinh:ddppdcp05}, Dinh constructed an $\f^*$-invariant measure when $\f$ is a polynomial 
correspondence whose Lojasiewicz exponent is strictly greater than 1. 
Clozel and Otal \cite{CloOtal:uecm01} and Clozel and Ullmo \cite{CloUllmo:cmmi03} constructed invariant 
measures for certain classes of modular correspondences.
On the other hand, Dinh, Kaufmann and Wu \cite{DinhKaufWu:dhcrs20} 
constructed invariant measures for holomorphic correspondences on Riemann surfaces
 that are {\bf{not}} weakly-modular. Recently, Matus de la
Parra \cite{Parra:emqmmg22} constructed invariant measures for the family of correspondences
considered by Bullett and Lomonaco in \cite{BullLomo:mqmwmg20}. 
Lastly, Bharali and Sridharan \cite{BhaSri:dhc16} constructed invariant measures for correspondences 
having a repeller.
\smallskip

Having mentioned some examples of invariant measures, we now move to ergodicity.
Our next result asserts that the measures given by Result~\ref{R:DS} are ergodic.

\begin{theorem}\label{T:ergodicity_DS}
Let $(X, \omega)$ be a compact K\"{a}hler manifold of dimension $k$.
Let $\f$ be a holomorphic correspondence of topological degree $\dg$ on
$X$. Suppose that $\dg_{k-1} < \dg$. 
Then the Dinh--Sibony measure $\mu_\f$ is ergodic as in Definition~\ref{D:ergodic}.
\end{theorem}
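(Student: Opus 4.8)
The plan is to show that the Dinh--Sibony measure $\mu_\f$ cannot be decomposed into two invariant pieces of positive mass, i.e. that any Borel set $B$ which is almost invariant with respect to $\f$ and $\mu_\f$ has measure $0$ or $1$. The strategy is to combine the equidistribution statement hidden in Result~\ref{R:DS} with the mixing-type estimate that powers its proof. First I would recall, from the construction in \cite{DinhSibony:dvtma06}, that the convergence $\dg^{-n}(\f^n)^*\omega^k \to \mu_\f$ is in fact quantitative: for a reasonably large class of test forms/functions $\psi$ one has $\dg^{-n}\langle(\f^n)^*\omega^k, \psi\rangle \to \int_X \psi\, \mu_\f$, and moreover this holds when $\omega^k$ is replaced by $\mu_\f$ itself, giving the key relation
\[
\frac{1}{\dg^{n}}\countsum_{y\in {\f^n}^{\dagger}(x)} \psi(y) \longrightarrow \int_X \psi\, d\mu_\f
\]
in $L^1(\mu_\f)$ (or in $L^2$) for all $\psi$ in a dense subclass (say bounded quasi-p.s.h. differences, or continuous functions). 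The non-accumulation of mass on pluripolar sets, also from Result~\ref{R:DS}, is what allows one to work $\mu_\f$-a.e.\ and to pass this from the dense subclass to all of $L^1(\mu_\f)$ and, by a Borel--Cantelli argument along a subsequence, to $\mu_\f$-a.e.\ convergence.

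With this in hand, let $B$ be almost invariant: there is $B'\subseteq B$ with $\f^{\dagger}(B')\subseteq B$ and $\mu_\f(B')=\mu_\f(B)$. Iterating, ${\f^n}^{\dagger}(B')\subseteq B$ for all $n$, so for $x\in B'$ every point of ${\f^n}^{\dagger}(x)$ lies in $B$, whence
\[
\frac{1}{\dg^{n}}\countsum_{y\in {\f^n}^{\dagger}(x)} \mathbf{1}_{B}(y) = 1 \qquad\text{for all } x\in B' \text{ and all } n.
\]
On the other hand, by the convergence above applied to $\phi=\mathbf{1}_B$ — here I would first establish the ergodic averages converge and then, using $\f^*$-invariance to see the averaged limit has the same integral, identify the Cesàro limit; alternatively one argues directly with the non-averaged sums — the left-hand side converges $\mu_\f$-a.e.\ to $\mu_\f(B)$. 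Taking $x$ ranging over $B'$, which has measure $\mu_\f(B)$, forces $\mu_\f(B)=1$ whenever $\mu_\f(B)=\mu_\f(B')>0$. Hence $\mu_\f(B)\in\{0,1\}$, which is precisely ergodicity in the sense of Definition~\ref{D:ergodic}.

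The main obstacle I anticipate is the passage from the equidistribution of $\dg^{-n}(\f^n)^*\omega^k$ (a statement about smooth test forms and weak convergence of currents) to a genuinely pointwise, $\mu_\f$-a.e.\ statement about the discrete sums $\dg^{-n}\sum'_{y\in{\f^n}^{\dagger}(x)}\mathbf{1}_B(y)$ for the rough indicator $\mathbf{1}_B$. Two issues must be handled: (i) replacing $\omega^k$ by $\mu_\f$ in the equidistribution — this should follow because $\mu_\f$ is itself not pluripolar and is dominated in a suitable sense by a bounded p.s.h.\ potential, so the same $\partial\bar\partial$-estimates apply; and (ii) upgrading $L^1$ or $L^2$ convergence of the averages to a.e.\ convergence, where Theorem~\ref{T:ergodic} already does the heavy lifting: it guarantees the Cesàro averages of $\dg^{-j}\sum'_{y\in{\f^j}^{\dagger}(x)}\phi(y)$ converge $\mu_\f$-a.e.\ to some $\Phi\in L^1(\mu_\f)$ with $\int\Phi\,d\mu_\f=\int\phi\,d\mu_\f$, so it suffices to show $\Phi$ is $\mu_\f$-a.e.\ constant. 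That in turn reduces, via the equivalence between ergodicity and a.e.-constancy of $\Phi$ for all $\phi$, to the almost-invariance argument above applied to the (almost-invariant) superlevel sets $\{\Phi > c\}$; establishing that these sets are almost invariant in the sense of Definition~\ref{D:ergodic} is the technical crux, and I would prove it by exploiting that $\Phi\circ\f^{\dagger}$ relates to $\Phi$ through the $\f^*$-invariance of $\mu_\f$, mirroring the classical proof that the limit in Birkhoff's theorem is invariant.
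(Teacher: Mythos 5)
Your overall strategy is the right one and is essentially the paper's: combine the Dinh--Sibony equidistribution of preimages of points outside a pluripolar set with the almost invariance of $B$ under all the iterates $\f^n$. However, there are two genuine gaps in the execution.

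First, the assertion that ``iterating'' $\f^{\dagger}(B')\subseteq B$ yields ${\f^n}^{\dagger}(B')\subseteq B$ for all $n$ is false for correspondences: from $\f^{\dagger}(B')\subseteq B$ one only gets ${\f^2}^{\dagger}(B')\subseteq \f^{\dagger}(B)$, and $\f^{\dagger}(B)$ need not lie in $B$ (only $\f^{\dagger}(B')$ does). One must shrink $B'$ at each step; this is exactly what Lemma~\ref{L:alminv_complement} does (it produces, for each $n$, a Borel set $B_n'\subseteq B$ with $\mu_\f(B_n')=\mu_\f(B)$ and $({\f^{n}})^{\dagger}(B_n')\subseteq B$), after which $\cap_n B_n'$ still has full measure in $B$. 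The conclusion you want is true, but your one-line justification is not.

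Second, and more seriously, the step ``the left-hand side converges $\mu_\f$-a.e.\ to $\mu_\f(B)$'' for $\phi=\chi_B$ is never established. The Dinh--Sibony equidistribution $\dg^{-n}({\f^n})^{*}\delta_a\to\mu_\f$ is weak-$*$ convergence, so it gives pointwise convergence of $\decosum{y\in{\f^n}^{\dagger}(a)}\phi(y)/\dg^{n}$ only for \emph{continuous} $\phi$; for the indicator of an arbitrary Borel set no pointwise statement is available, and neither of your proposed repairs closes the gap: the density/Borel--Cantelli route is not carried out (a.e.\ convergence of the non-averaged sums for rough $\phi$ is genuinely delicate), while the route through Theorem~\ref{T:ergodic} is circular --- to conclude that the Birkhoff limit $\Phi$ is a.e.\ constant you need precisely that almost invariant sets (the superlevel sets of $\Phi$) have measure $0$ or $1$, which is the ergodicity you are trying to prove. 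The paper sidesteps the pointwise issue entirely by using the equidistribution only in \emph{integrated} form:
\[
\int_X \Big(\frac{1}{\dg^{n}}\countsum_{y\in{\f^n}^{\dagger}(x)}\phi(y)\Big)\,\psi(x)\,d\mu_\f(x)\;\longrightarrow\;\int_X\phi\,d\mu_\f\int_X\psi\,d\mu_\f,
\]
valid for bounded $\psi$ and $\mu_\f$-integrable $\phi$ by dominated convergence plus density (this is where $\mu_\f(E)=0$ for the exceptional pluripolar set $E$ enters). Taking $\phi=\chi_B$ and $\psi=\chi_{B^c}$, the left-hand side is identically $0$ for every $n$, because $\decosum{y\in{\f^n}^{\dagger}(x)}\chi_B(y)/\dg^{n}=\chi_B(x)$ for $\mu_\f$-a.e.\ $x$ (by the corrected iteration above applied to both $B$ and $B^c$), and the limit then gives $\mu_\f(B)\,\mu_\f(B^c)=0$. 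If you replace your pointwise claim by this integrated version, your argument becomes the paper's proof.
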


\noindent{Even though, we have stated Theorem~\ref{T:ergodicity_DS} for holomorphic correspondences,
Theorem~\ref{T:ergodicity_DS} (and Theorem~\ref{T:ergodic}) holds for meromorphic correspondences
as in Result~\ref{R:DS}.
The property of $\mu_{\f}$ that $\mu_{\f}$ puts zero mass on pluripolar sets plays an important role here 
(see Remark~\ref{Re:mero_corres} for further details).}
We would also like to mention that the proof of Theorem~\ref{T:ergodic} is 
purely measure theoretic, and thus holds for 
certain multi-valued maps on more general spaces.
As mentioned earlier, if $f$ is a rational map of degree at least 2, then the Dinh--Sibony measure $\mu_f$
is the measure
constructed in \cite{FLM:imrm83, ljubich:eprers83}.
This, combined with Theorem~\ref{T:ergodic} and Theorem~\ref{T:ergodicity_DS},
immediately gives the following ergodic theorem that is
new in the classical case of rational maps:
\begin{corollary}\label{C:maps}
Let $f$ be a rational map on the Riemann sphere $\sph$ of degree $\dg \geq 2$. Let
$\mu_f$ be the measure constructed in \cite{FLM:imrm83, ljubich:eprers83}.
Then, for $\phi \in L^1(\mu_f)$, we have
\[
\lim_{n\to \infty}\frac{1}{n}\sum_{j=0}^{n-1}
\countsum_{y \in {f^{-j}}(x)} \frac{\phi(y)}{\dg^j}
= \int_{\sph} \phi\,d\mu_f
\]
for $\mu_f$-almost every $x \in \sph$.
\end{corollary}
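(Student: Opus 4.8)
The plan is to deduce Corollary~\ref{C:maps} from Theorem~\ref{T:ergodic} and Theorem~\ref{T:ergodicity_DS} by realizing the rational map $f$ as a holomorphic correspondence on $\sph$ and identifying the measure of \cite{FLM:imrm83, ljubich:eprers83} with the associated Dinh--Sibony measure.

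First I would take $\Gamma_f := \{(x, f(x)) : x \in \sph\} \subseteq \sph \times \sph$, the graph of $f$. Since $\sph$ is a compact complex manifold of dimension $1$, $\Gamma_f$ is an irreducible $1$-dimensional subvariety; $\pi_1|_{\Gamma_f}$ is a biholomorphism onto $\sph$ and $\pi_2|_{\Gamma_f}$ is proper, surjective and $\dg$-to-one. Hence $\Gamma_f$ defines a holomorphic correspondence $F$ on $\sph$, with $N = 1$ and $m_1 = 1$, whose topological degree equals $\dg = \deg f$. Unwinding the definition of composition, $F^n$ is the correspondence given by the graph of $f^n$, so $F^{\dagger}(A) = f^{-1}(A)$ and, for every $x$, ${F^n}^{\dagger}(x) = f^{-n}(x)$ as a set; moreover the multiplicity with which a point $y$ is counted in ${F^n}^{\dagger}(x)$ in the sense of the pullback $F^*$ of Section~\ref{S:def} is the local degree $\deg_y(f^n)$, which is exactly the multiplicity intended by $\countsum$ in Corollary~\ref{C:maps} (chain rule for local degrees makes this compatible across iterates).

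Next I would check the hypotheses of Result~\ref{R:DS} and of Theorem~\ref{T:ergodicity_DS}. Here $k = 1$, so the relevant lower dynamical degree is $\dg_{k-1} = \dg_0 = 1$, and $\dg_0 = 1 < \dg$ since $\dg \geq 2$. Thus Result~\ref{R:DS} applies: the Dinh--Sibony measure $\mu_F$ exists, is $F^*$-invariant, and puts no mass on pluripolar sets; by the identification recalled after Result~\ref{R:DS}, $\mu_F$ coincides with the classical measure $\mu_f$ of \cite{brolin:isuirm65, FLM:imrm83, ljubich:eprers83}. By Theorem~\ref{T:ergodicity_DS}, $\mu_f$ is ergodic in the sense of Definition~\ref{D:ergodic}.

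Finally I would apply Theorem~\ref{T:ergodic} to $F$, to $\mu = \mu_f$, and to the given $\phi \in L^1(\mu_f)$: the averages
\[
\frac{1}{n}\sum_{j=0}^{n-1} \countsum_{y \in {F^j}^{\dagger}(x)} \frac{\phi(y)}{\dg^j}
\]
converge $\mu_f$-almost everywhere to some $\Phi \in L^1(\mu_f)$ with $\int_{\sph}\Phi\,d\mu_f = \int_{\sph}\phi\,d\mu_f$; since $\mu_f$ is ergodic, the final assertion of Theorem~\ref{T:ergodic} forces $\Phi \equiv \int_{\sph}\phi\,d\mu_f$ $\mu_f$-almost everywhere. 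Substituting ${F^j}^{\dagger}(x) = f^{-j}(x)$ and $\dg = \deg f$ gives the claim. I do not expect a genuine obstacle here: the only points requiring care are the bookkeeping of multiplicities\,---\,that ``with multiplicity'' in ${F^j}^{\dagger}(x)$ agrees term by term with the classical count of $f^{-j}$-preimages weighted by local degrees\,---\,and the appeal to the standard identification of the Dinh--Sibony measure of the graph correspondence with the measure of \cite{brolin:isuirm65, FLM:imrm83, ljubich:eprers83}.
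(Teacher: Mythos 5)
Your proposal is correct and follows exactly the route the paper intends: the paper states the corollary as an immediate consequence of Theorem~\ref{T:ergodic} and Theorem~\ref{T:ergodicity_DS} together with the identification of the Dinh--Sibony measure of the graph correspondence with the measure of \cite{FLM:imrm83, ljubich:eprers83}, and your write-up simply makes explicit the verifications (graph as a correspondence, $\dg_0=1<\dg$, multiplicity bookkeeping) that the paper leaves implicit.
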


We end this section by mentioning a concrete class of correspondences, 
where Theorem~\ref{T:ergodic} can be useful.
Let $\mathcal G=\{f_1, \dots, f_N\}$ be a finite collection of surjective endomorphisms of $X$. 
To the collection $\mathcal G$, we associate a correspondence as follows:
\begin{equation}\label{E:semi_corres}
  \Gamma_{\mathcal G}:= \sum_{1 \leq i \leq N} {\rm{graph}} (f_i),
\end{equation}
where ${\rm{graph}} (f_i)$ is the graph of $f_i$. 
If $X$ is the Riemann sphere and $f_i$'s are rational maps of degree at least 2, 
then the measure $\mu_{\mathcal G}$ (the
Dinh--Sibony measure associated with the correspondence $\Gamma_{\mathcal G}$) coincides with 
the measure constructed by Boyd \cite{boyd:imfgrs99}. Since, in this case, the support of $\mu_{\mathcal G}$ is
equal to the Julia set of the semigroup generated by ${\mathcal G}$, the Theorem~~\ref{T:ergodic}
can be useful to understand the dynamics on the Julia set.
The idea of studying the dynamics of a finitely generated rational semigroup through the correspondence
$\Gamma_{\mathcal G}$ was introduced by Bharali--Sridharan in \cite{BhaSri:hcrfgrs17}.
The other special case is when $f_i$'s are the M\"{o}bius transformations. Studying dynamics of 
$\Gamma_{\mathcal G}$ in this case can be used to study finitely generated Kleinian groups.
The results in \cite{DinhKaufWu:dhcrs20} can be used to construct invariant measures for
 non-elementary finitely generated Kleinian groups. Also, see \cite{DinhKaufWu:prmdpv21} for an
 application of correspondences to random matrices.
\smallskip
 
\noindent{\bf{Outline of the paper:}} Section~\ref{S:def} gives background on holomorphic correspondences
and the Dinh--Sibony 
measure. In section~\ref{S:existence}, we give existence of $\f^*$-invariant and ergodic measures for a
holomorphic correspondence defined on a compact complex manifold. The proof of 
Theorems~\ref{T:ergodicity_DS}
appears in Section~\ref{S:examples}.
Section~\ref{S:prelem} is devoted to proving preliminary results needed for the
proof of Theorem~\ref{T:ergodic}. In the last section, Section~\ref{S:ergodic}, we give the proof of 
Theorem~\ref{T:ergodic}.
\medskip

\section{Fundamental definitions}\label{S:def}

In this section, we shall collect some definitions and concepts about holomorphic correspondences
that we had mentioned in passing in Section~\ref{S:intro}. 
We refer the reader to \cite{DinhSibony:dvtma06} for a more detailed discussion in the setting
of meromorphic correspondences defined on compact manifolds.
Most of the material in this section is well known; 
the reader familiar with these concepts can safely move on to the next section. 
\smallskip

Let $X$ be a complex manifold, let $\Gamma$ be a holomorphic correspondence on $X$, and
let $\f$ and $\Gamma$ be related as described in Section~\ref{S:intro}.
With the presentation of $\Gamma$ as in \eqref{E:corresp},
the coefficient $m_i\in \Z_+$ will be called the \emph{multiplicity} of $\Gamma_i$.
We shall call $\Gamma$ the \emph{graph} of $\f$. 
We define the \emph{support} of the correspondence $\f$ by $|\Gamma|:=\cup_{i=1}^{N} \Gamma_i$.
For $\Gamma_i$ as above, we define
$\Gamma_i^{\dagger}:=\{ (y,x) : (x,y) \in \Gamma_i\}$. Now we use this to define
the \emph{adjoint}
\[
  \Gamma^{\dagger}:= \sum_{1 \leq i \leq N} m_i\Gamma_i^{\dagger}.
\]
Observe that $\Gamma^{\dagger}$ is also a holomorphic correspondence on $X$.
The holomorphic correspondence $\Gamma^{\dagger}$ is called the \emph{adjoint} of $\Gamma$.
Also, note that the set valued map $\f_{\Gamma}^{\dagger}$, defined in Section~\ref{S:intro}, is same as
the set-valued map $\f_{\Gamma^{\dagger}}$ induced by
$\Gamma^{\dagger}$.
We shall adopt the notational convenience noted in Section~\ref{S:intro}
and refer to the correspondence $\Gamma^{\dagger}$ as $\f^{\dagger}$.
\smallskip

The \emph{topological degree} of $\f$ is the number of points in a generic fiber counted with multiplicities.
It is well known that there exists a non-empty Zariski-open set $\Omega \subseteq X$ such that,
writing $Y^i:= \pi_2^{-1}(\Omega)\cap \Gamma_i$, the
map $\left.\pi_2\right|_{Y^i}: Y^i \to \Omega$ is a $\delta_i$-sheeted holomorphic covering for
some $\delta_i \in \Z_+$, $i=1, \dots , N$. 
Thus $\delta_i=\sharp\{y: (y,x) \in \Gamma_i\}$ for any $x \in \Omega$,
where $\sharp$ denotes the cardinality. 
Then the topological degree $\dg(\f)$ of $\f$ is 
\begin{equation}\label{E:topdeg}
  \dg(\f):= \sum_{i=1}^N m_i \delta_i = \sum_{i=1}^N m_i \ \sharp\{y: (y,x) \in \Gamma_i\}, \quad x \in \Omega.
\end{equation}
We shall use $\dg$ instead of $\dg(\f)$ whenever there is no confusion.
It is classical that, for every $x \in X$, $\f_{\Gamma}^{\dagger}(x)$ contains $\dg$ points counted with multiplicity.
\smallskip

Two holomorphic correspondences on $X$ can be composed to get a new holomorphic correspondence on $X$.
Let $\f_1$ and $\f_2$ be two holomorphic correspondences on $X$ induced by holomorphic $k$-chains
\[
  \Gamma^1 = \countsum_{1\leq i\leq M}\Gamma^1_i  {\rm{ \ and \ }}
  \Gamma^2 = \countsum_{1 \leq j \leq M'} \Gamma^2_j
\]
respectively. In the above presentation of $\Gamma^1$ (resp., $\Gamma^2$), we do not assume that the
$\Gamma^1_i$'s (resp., $\Gamma^2_j$'s) are distinct varieties\,---\,varieties repeat according to multiplicities.
The ``decorated'' summation above will denote the latter presentation.
Then, by definition, $\f_1 \circ \f_2$ is the holomorphic correspondence induced by
\[
  \Gamma^1  \circ \Gamma^2 = \countsum_{1 \leq i \leq M} \countsum_{1 \leq j \leq M'}\Gamma^1_i  \circ \Gamma^2_j,
\]
where $\Gamma^1_i  \circ \Gamma^2_j$ is defined as follows:
Consider the subset of $X \times X$ given by
\begin{equation}\label{E:compos}
C:=  \{(x_1,x_3) \in X \times X : \exists x_2 \in X {\rm{ \ such \ that \ }}
  (x_1,x_2) \in \Gamma^2_j {\rm{ \ and \ }} (x_2,x_3) \in \Gamma^1_i\}.
\end{equation}
The composition $\Gamma^1_i  \circ \Gamma^2_j$ is the holomorphic $k$-chain whose support is $C$.
Note that the set $C$ need not be an irreducible subvariety.
The multiplicities of irreducible components of $C$ are as follows. 
Let $C_{s}$ denote an arbitrary
irreducible component of $C$. Then, its multiplicity in $\Gamma^1_i  \circ \Gamma^2_j$
is the distinct number of $x_2$'s satisfying the condition stated in \eqref{E:compos} for a
\textbf{generic} $(x_1,x_3)\in C_{ij,\,s}$.
We would like to emphasise that $\Gamma^1_i  \circ \Gamma^2_j$ need not be irreducible.
This is the reason why the data defining a holomorphic correspondence must include multiplicities.
With the above definition of composition, if $A$ is a subset of $X$ then we have
 $\f_1 \circ \f_2(A)=\f_1(\f_2(A))$.
\smallskip

If $\f$ is a holomorphic correspondence on $X$ and $B$ is a Borel subset of $X$.
We show that $\f(B)$ and $\f^{\dagger}(B)$ are Borel subsets of $X$. To show this we need a
result about bimeasurable functions.
Let $X$ and $Y$ be topological spaces. Recall that
a function $f:X \to Y$ is Borel measurable if the preimage (under $f$) of every Borel subset of $Y$
is a Borel subset of $X$.
We say that a Borel measurable function $f$ is \emph{bimeasurable} if the image (under $f$) of every
Borel subset of $X$ is a Borel subset of $Y$.

\begin{result}[Purves, \cite{Purves:bf66}]\label{R:bimeasurable}
Let $X$ and $Y$ be complete separable metric spaces and $E$ a Borel subset of $X$.
Consider a Borel measurable function $f:E \to Y$. 
In order that $f$ is bimeasurable it is necessary and sufficient that the set
$\{\zeta \in Y: f^{-1}{\{\zeta\}} \textnormal{ is uncountable}\}$ is countable. 
\end{result}

For an alternative proof of the above result, also see \cite{Mauldin:bf81}. 
Observe that Result~\ref{R:bimeasurable} holds for connected complex manifolds.
By definition
$\f(B):= \pi_2 (\pi_1^{-1} (B) \cap |\Gamma|)$. Since $|\Gamma|$ is a closed set,
$\pi_1^{-1} (B) \cap |\Gamma|$ is a Borel subset of $X \times X$. By the definition
of a holomorphic correspondence, $\pi_2|_{\Gamma_j}$ is a finite map for each $j$.
Thus, by Result~\ref{R:bimeasurable}, it follows that $\f(B)$ is a Borel subset of $X$.
Similarly, it follows that $\f^{\dagger}(B)$ is also a Borel subset of $X$.
\smallskip

Let $\D$ be a current on $X$ of bidegree $(p,p)$, $0 \leq p \leq k$. We can pull back and
push forward $\D$ using the following prescription:
\begin{equation}\label{E:pull_current}
  \f^*(\D):= (\pi_1)_*(\pi_2^*\D\wedge [\Gamma])  {\rm{\ and \ }}  \f_*(\D):= (\pi_2)_*(\pi_1^*\D\wedge [\Gamma]) 
\end{equation}
whenever the intersection current $\pi_2^*\D\wedge [\Gamma]$ and $\pi_1^*\D\wedge [\Gamma]$ makes sense. 
Here, $[\Gamma]$ denotes the sum (weighted by multiplicities) of the currents of integration on   
the varieties that constitute $\Gamma$.
In this paper, we are mainly interested in the pull-back of a finite Borel measure\,---\,which can be viewed as
a current of bidegree $(k,k)$. Let $\nu$ be a finite Borel measure on $X$. We will work out
$\f^*\nu$ in detail here. Let $\phi$ be a compactly supported continuous function on $X$.
\[
  \langle \f^*\nu , \phi \rangle
  =\langle \pi_2^*(\nu)\wedge [\Gamma], \pi_1^*\phi \rangle
  :=\sum_{1 \leq i \leq N} m_i \langle \nu, (\pi_2|_{\Gamma_i})_* (\phi \circ \pi_1) \rangle.
\]
Let $\Omega \subseteq X$ be a Zariski-open set and let $(Y^i, \Omega, \left.\pi_2\right|_{Y^i})$
be the holomorphic coverings introduced prior to \eqref{E:topdeg}
for each $ i=1, \dots , N$. Then for each $x \in \Omega$,
$(\pi_2|_{\Gamma_i})_* (\phi \circ \pi_1)(x)$ is the sum of the values of 
$\phi \circ \pi_1$ on the fiber $\pi_2^{-1}\{x\} \cap \Gamma_i$.
Thus we get
\[
  (\pi_2|_{\Gamma_i})_* (\phi \circ \pi_1)(x)= \sum_{y: (y,x) \in \Gamma_i} \phi(y).
\]
It is classical that $(\pi_2|_{\Gamma_i})_* (\phi \circ \pi_1)(x)$ extends continuously to 
$X \setminus \Omega$. Thus, for $x \in X$, we get
\[
(\pi_2|_{\Gamma_i})_* (\phi \circ \pi_1)(x)= \countsum_{y \in \f^{\dagger}(x)} \phi(y),
\]
where ${\sum\nolimits}'$ denotes the sum with $y$'s, repeated with multiplicity.
Therefore, we have
\[
  \langle \f^*\nu, \phi \rangle := \int_X \countsum_{y \in \f^{\dagger}(x)} \phi(y)\,d\nu(x).
\]
If $\nu$ is a Borel probability measure
then $\f^*\nu$ is a positive measure of total mass equal to the topological degree of $\f$.
Let $\f$ be a holomorphic correspondence of topological degree $\dg$ on $X$ and 
$\mu$ an $\f^*$-invariant measure on $X$,
i.e., it satisfies $\f^*\mu= \dg \cdot \mu$.
Using the definitions above, this is equivalent to:
for any compactly supported continuous function 
(more generally, by density, for any $\mu$-integrable function)
$\phi$ on $X$, we have
\begin{equation}\label{E:inv_int}
\frac{1}{\dg}\int_X \countsum_{y \in \f^{\dagger}(x)} \phi(y)\,d\mu(x) = \int_X \phi\,d\mu.
\end{equation}
Let $\f_1$ and $\f_2$ be two meromorphic correspondences on $X$ and $\nu$ be a
probability measure on $X$. It easily follows from the above discussion that
$ (\f_1 \circ \f_2)^*\nu = {\f_2}^*({\f_1}^*\nu)$.
Thus, if $\mu$ is $\f^*$-invariant then $\mu$ is ${(\f^n)}^*$-invariant for every $n \in N$.
\smallskip

We end this section by discussing the Dinh--Sibony measure associated with certain holomorphic
correspondences, which we had mentioned in Section~\ref{S:intro}.
To discuss the existence of the Dinh--Sibony measure for a holomorphic correspondence, 
we need to define the pull-back of a smooth $(p,p)$-form.
Let $\f$ be a holomorphic correspondence on a compact complex manifold of dimension $k$. Consider
a smooth $(p,p)$-form $\alpha$, $0 \leq p \leq k$.
Since $\alpha$ is also a current of bidegree $(p,p)$, the prescription \eqref{E:pull_current}
defines $\f^*(\alpha)$, since $\pi_2^* \alpha \wedge [\Gamma]$ makes sense as a $(p,p)$-current
on $|\Gamma|$.
\smallskip

Consider a compact K\"{a}hler manifold $(X, \omega)$ of dimension $k$, and let $\int {\omega}^k=1$.
Consider a holomorphic correspondence $\f$ on $X$ of topological degree $\dg$.
We define the dynamical degree of order $p$, $0 \leq p \leq k$,
\[
  \dg_{p}(\f):= \lim_{n \to \infty} {\left( \int_X {(\f^n)}^* {\omega}^{p} \wedge {\omega}^{k-p} \right)}^{1/n}.
\]
Note that $\dg_k(\f)$ is just the topological degree of $\f$.
We shall use $\dg_p$ instead of $\dg_p(\f)$ whenever there is no confusion.
We now define
a sequence $\mu_n:=\dg^{-n} {(\f^n)}^* {\omega}^k$.
Since ${\omega}^k$ is a volume form on $X$, it follows that $\mu_n$ is a sequence
of probability measures.
Under the hypothesis that
$\dg_{k-1} < \dg$, Dinh--Sibony proved \cite[Section~5]{DinhSibony:dvtma06}
that $\mu_n$ converges in the weak* topology to a $\f^*$-invariant probability measure $\mu_{\f}$.
In fact, they showed that if $u$ is a quasi-p.s.h. function (a function that is locally the sum of a
smooth function and a plurisubharmonic function) then $u$ is $\mu_{\f}$-integrable and
$\langle\mu_n, u\rangle \to \langle\mu_{\f}, u\rangle$ as $n \to \infty$.
In particular, $\mu_{\f}$ puts zero mass on pluripolar sets.
Dinh--Sibony also showed that preimages of a generic point are equidistributed according to the measure $\mu_{\f}$,
i.e., there exists a pluripolar subset $E$ of $X$ such that for every $a \in X \setminus E$, we have
\[
  \dg^{-n} {(\f^n)}^* {\delta}_a\to \mu_{\f}
\] 
as $n \to \infty$. See \cite[Sections~1 and~5]{DinhSibony:dvtma06} for a detailed discussion. We shall
use the above properties
in Section~\ref{S:examples} to prove the ergodicity of $\mu_{\f}$.
\smallskip

We end this section by mentioning that the Result~\ref{R:DS} and the above equidistribution property hold
for a meromorphic correspondence (satisfying the degree condition in Result~\ref{R:DS}) on a compact 
K\"{a}hler manifold. See \cite{DinhSibony:dvtma06} for more details. 
Also, see a recent article by Vu \cite{Vu:emmskm20} for an extension of these results
to meromorphic correspondences on compact non-K\"{a}hler manifolds.
\medskip

\section{Existence of $\f^*$-invariant and ergodic measures}\label{S:existence}

Let $\f$ be a holomorphic correspondence on a complex manifold $X$.
This section is devoted to proving the existence of an $\f^*$-invariant and an $\f^*$-invariant 
ergodic measure on $X$ when $X$ is compact.
Before proving these results, we prove a lemma about the complement of an almost invariant set. 
Recall that a Borel subset $B$ of $X$ is almost invariant with respect to $\f$ and $\mu$ 
if there exists a Borel set $B' \subseteq B$
such that $\f^{\dagger}(B') \subseteq B$ and $\mu(B')=\mu(B)$.
When $\f$ and $\mu$ are clear from the context, for simplicity, we shall just say that $B$ is almost invariant.

\begin{lemma}\label{L:alminv_complement}
Let $\f$ be a holomorphic correspondence of topological degree $\dg$ on a complex manifold
$X$ and $\mu$ an $\f^*$-invariant Borel probability measure on $X$.
Let $B$ be a Borel subset of $X$ such that $B$ is almost invariant with respect to $\f$ and $\mu$.
Then $B^c$ is almost invariant with respect to $\f$ and $\mu$.
Moreover, $B$ and $B^c$ are almost invariant with respect to $\f^n$ and $\mu$ for every $n \in N$.
\end {lemma}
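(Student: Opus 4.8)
The plan is to phrase everything in terms of the multiplicity‑counting function attached to $\f^{\dagger}$ together with the adjoint relation $y\in\f^{\dagger}(x)\iff x\in\f(y)$, which is immediate from the definitions of $\f$ and $\f^{\dagger}$ (and which gives $(\f^{n})^{\dagger}=(\f^{\dagger})^{n}$). For a Borel set $S\subseteq X$ write
\[
n_S(x):=\countsum_{y\in\f^{\dagger}(x)}\mathbf{1}_S(y),
\]
the number of points of $\f^{\dagger}(x)$ lying in $S$, counted with multiplicity. This is a bounded Borel function, $0\le n_S\le\dg$, and \eqref{E:inv_int} applied to $\phi=\mathbf{1}_S$ gives $\int_X n_S\,d\mu=\dg\,\mu(S)$. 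Three pointwise facts will be used repeatedly: if $\f^{\dagger}(x)\subseteq S$ then $n_S(x)=\dg$; if $\f^{\dagger}(x)\cap S\neq\varnothing$ then $n_S(x)\ge1$; and, by the adjoint relation, $\{x:\f^{\dagger}(x)\cap S\neq\varnothing\}=\f(S)$, which is Borel by Result~\ref{R:bimeasurable}.

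First I would prove that $B^{c}$ is almost invariant. Let $B'\subseteq B$ be a witness for $B$, so $\f^{\dagger}(B')\subseteq B$ and $\mu(B\setminus B')=0$. Then $n_B\equiv\dg$ on $B'$ and $n_B\ge1$ on $\f(B)$; since $B'$ and $\f(B)\setminus B'$ are disjoint and $n_B\ge0$ everywhere, splitting $\int_X n_B\,d\mu$ over $B'$, over $\f(B)\setminus B'$, and over the remainder gives
\[
\dg\,\mu(B)=\int_X n_B\,d\mu\ \ge\ \dg\,\mu(B')+\mu\bigl(\f(B)\setminus B'\bigr)=\dg\,\mu(B)+\mu\bigl(\f(B)\setminus B'\bigr).
\]
Hence $\mu(\f(B)\setminus B')=0$, and in particular $\mu(\f(B)\setminus B)=0$ since $B'\subseteq B$. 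Now set $C':=B^{c}\setminus\f(B)$: it is Borel, $C'\subseteq B^{c}$, and for $x\in C'$ we have $x\notin\f(B)$, i.e. $\f^{\dagger}(x)\cap B=\varnothing$, so $\f^{\dagger}(x)\subseteq B^{c}$; thus $\f^{\dagger}(C')\subseteq B^{c}$. Finally $\mu(B^{c})-\mu(C')=\mu\bigl(B^{c}\cap\f(B)\bigr)=\mu(\f(B)\setminus B)=0$, so $C'$ witnesses that $B^{c}$ is almost invariant with respect to $\f$ and $\mu$.

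For the last assertion I would first record the null‑set propagation fact: if $\mu(N)=0$ then $\mu(\f(N))=0$ — this is the same computation with $S=N$, since then $\int_X n_N\,d\mu=0$ forces $n_N=0$ $\mu$‑a.e. while $n_N\ge1$ on $\f(N)$; iterating (each $\f^{j}(N)$ being Borel), $\mu(\f^{j}(N))=0$ for all $j$. Next, fix $n\in\N$ and build a witness for $B$ under $\f^{n}$ by peeling off preimages: set $L_0:=B'$ and, inductively, $L_j:=\{x\in L_{j-1}:\f^{\dagger}(x)\subseteq L_{j-1}\}=L_{j-1}\setminus\f(L_{j-1}^{c})$, which is Borel. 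A straightforward induction identifies $L_j=\{x:(\f^{\dagger})^{i}(x)\subseteq B'\text{ for }0\le i\le j\}$; in particular $(\f^{\dagger})^{n-1}(L_{n-1})\subseteq B'$, so $(\f^{n})^{\dagger}(L_{n-1})=(\f^{\dagger})^{n}(L_{n-1})\subseteq\f^{\dagger}(B')\subseteq B$. Moreover, if $x\in L_{j-1}\setminus L_j$ then $(\f^{\dagger})^{j-1}(x)\subseteq B'$ while $(\f^{\dagger})^{j}(x)\not\subseteq B'$; since $(\f^{\dagger})^{j}(x)=\f^{\dagger}\bigl((\f^{\dagger})^{j-1}(x)\bigr)\subseteq\f^{\dagger}(B')\subseteq B$, this forces $(\f^{\dagger})^{j}(x)\cap(B\setminus B')\neq\varnothing$, i.e. $x\in\f^{j}(B\setminus B')$, a $\mu$‑null set. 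Hence $\mu(B\setminus L_{n-1})=0$, so (as $\mu$ is $(\f^{n})^{*}$‑invariant) $L_{n-1}$ witnesses that $B$ is almost invariant with respect to $\f^{n}$ and $\mu$; applying the already‑proved first part with $\f^{n}$ in place of $\f$ then yields that $B^{c}$ is almost invariant with respect to $\f^{n}$ and $\mu$.

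The main obstacle is conceptual rather than computational: the naive argument for maps — complement $B$ and iterate $\f^{\dagger}$ — fails because $\f^{\dagger}(B^{c})\subseteq B^{c}$ can be false, so the proof must instead pass through $\f$‑images and rely on the fact that, under $\f^{*}$‑invariance, $\f$ carries $\mu$‑null sets to $\mu$‑null sets (the content of the counting estimate above), while bimeasurability (Result~\ref{R:bimeasurable}) keeps all the sets involved Borel. Selecting the correct witness sets $C'$ and $L_{n-1}$, and verifying their defining properties, is then routine; the one point to keep straight throughout is that the hypothesis is one‑sided, so null sets may be propagated along $\f$ but not necessarily along $\f^{\dagger}$.
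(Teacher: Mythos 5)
Your proof is correct and follows essentially the same route as the paper: the first part derives $\mu(\f(B)\cap B^c)=0$ from the $\f^*$-invariance identity applied to $\chi_B$, and the second part builds witnesses for $\f^n$ by deleting the $\mu$-null sets $\f^j(B\setminus B')$, exactly as in the paper's induction. The only (harmless) differences are cosmetic: you phrase the computation via the counting function $n_S$ and derive the null-set propagation $\mu(N)=0\Rightarrow\mu(\f(N))=0$ directly, where the paper cites an external lemma, and your nested sets $L_j$ play the role of the paper's $B_{j+1}'$.
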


\begin{proof}
Since $B$ is almost invariant, there exists a Borel subset $B'$ of $B$
such that $\f^{\dagger}(B') \subseteq B$ and $\mu(B')=\mu(B)$. Consider,
$
C:= \{x \in B^c : \f^{\dagger}(x) \cap B \neq \emptyset\}.
$
Observe that $\f^{\dagger}(B^c\setminus C) \subseteq B^c$. 
Since $C= \f(B) \cap B^c$, $C$ is a Borel set.
Thus if we prove $\mu(C)=0$,
we are done.
If we take $\phi:= \chi_B$, the 
characteristic function of $B$, in \eqref{E:inv_int}, we get
\[
\int_X \chi_B\,d\mu=\int_{B'}  1 \,d\mu
+ \int_{B \setminus B'} \countsum_{y \in \f^{\dagger}(x)} \frac{\chi_B(y)}{\dg}\,d\mu(x)
+ \int_{C} \countsum_{y \in \f^{\dagger}(x)} \frac{\chi_B(y)}{\dg}\,d\mu(x).
\]
By the definition of $C$ and since $\mu(B')=\mu(B)$, we get
\[
\mu(B)\geq \mu(B)
+ \int_{C} \frac{1}{\dg}\,d\mu.
\]
Thus $\mu(C)=0$. This proves that $B^c$ is almost invariant with respect to $\f$ and $\mu$.
\smallskip

We now prove that $B$ is almost invariant with respect to $\f^n$ and $\mu$.
We use induction on $n$ to prove this.
First note that the measure $\mu$ is $(\f^n)^*$-invariant for every $n \in \N$.
Assume that $B$ is almost invariant with respect to $\f^j$ and $\mu$.
We claim that $B$ is almost invariant with respect to $\f^{j+1}$ and $\mu$.
There exists a Borel subset $B_j'$ of $B$
such that ${(\f^j)}^{\dagger}(B_j') \subseteq B$ and $\mu(B_j')=\mu(B)$.
Consider $B_{j+1}':=B_{j}' \setminus \f^j(B \setminus B')$, where $B'$ is a Borel subset of $B$ such that
$\f^{\dagger}(B') \subseteq B$ and $\mu(B')=\mu(B)$.
It follows that ${(\f^{j})}^{\dagger}(B_{j+1}') \subseteq B'$.
Thus
${(\f^{j+1})}^{\dagger}(B_{j+1}') \subseteq B$.
It remains to prove that $\mu(B_{j+1}')=\mu(B)$. 
Since the measure $\mu$ is $(\f^j)^*$-invariant, it follows that, for any Borel subset $A$ of $X$, we have
\[
\frac{1}{\dg^j}\, \mu(\f^j(A)) \leq \mu(A).
\]
See \cite[Lemma~5.6]{Londhe:ridmcahs22} for a detailed proof.
Since $\mu(B \setminus B')=0$, we get
$\mu(\f^j(B \setminus B'))=0$. 
Thus $B$ is almost invariant with respect to $\f^n$ and $\mu$ for every $n \in \N$.
As in the first paragraph of the proof, we show $B^c$ is almost invariant with respect to $\f^n$ and $\mu$
for every $n \in \N$.
\end{proof}

Given a set $A$ such that $\f^{\dagger}(A) \subseteq A$,
we may not have $\f^{\dagger}(A^c) \subseteq A^c$ as in the case of maps.
However, in the presence of an
$\f^*$-invariant measure $\mu$, the above lemma asserts in particular that $A^c$ is
almost invariant with respect to $\f$ and $\mu$.
\smallskip

We first show the existence of an $\f^*$-invariant measure 
when the manifold $X$ is compact.
If $X$ is non-compact, then we cannot guarantee the existence
of an $\f^*$-invariant measure, for example, consider $X=\C$ and the holomorphic correspondence
induced by the graph of the map $f(z)=z+1$.

\begin{proposition}\label{P:invar_exist}
Let $\f$ be a holomorphic correspondence of topological degree $\dg$
on a {\bf{compact}} complex manifold $X$. Then
there exists an $\f^*$-invariant Borel probability measure.
\end{proposition}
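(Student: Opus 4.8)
The plan is to obtain an $\f^*$-invariant measure as a fixed point of the normalised pull-back operator on the space $\mathcal P(X)$ of Borel probability measures on $X$, via a Krylov--Bogolyubov averaging argument. First I would record the dual picture of $\f^*$: for $\phi\in C(X)$ set
\[
(\Lambda\phi)(x):=\countsum_{y\in\f^{\dagger}(x)}\phi(y),\qquad x\in X.
\]
By the discussion in Section~\ref{S:def}, for each irreducible component $\Gamma_i$ of $\Gamma$ the function $x\mapsto(\pi_2|_{\Gamma_i})_*(\phi\circ\pi_1)(x)$ agrees with $\sum_{y:(y,x)\in\Gamma_i}\phi(y)$ on the Zariski-open set $\Omega$ and extends continuously across $X\setminus\Omega$; summing over $i$ with the multiplicities $m_i$ gives $\Lambda\phi\in C(X)$. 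The operator $\Lambda$ is positive and $\Lambda\mathbf 1=\dg$, so $\dg^{-1}\Lambda$ is a positive unital operator on $C(X)$. Dually, for $\nu\in\mathcal P(X)$ the identity $\langle\dg^{-1}\f^*\nu,\phi\rangle=\langle\nu,\dg^{-1}\Lambda\phi\rangle$\,---\,which is precisely the formula for $\f^*\nu$ computed in Section~\ref{S:def}\,---\,shows that $T\nu:=\dg^{-1}\f^*\nu$ again lies in $\mathcal P(X)$. Thus $T\colon\mathcal P(X)\to\mathcal P(X)$ is a well-defined affine map, and since $\dg^{-1}\Lambda\phi\in C(X)$ for each $\phi\in C(X)$ it is continuous for the weak* topology.

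Next I would apply Krylov--Bogolyubov. Since $X$ is compact, $\mathcal P(X)$ equipped with the weak* topology is compact and metrisable. Fix any $\nu_0\in\mathcal P(X)$ and set
\[
\mu_n:=\frac1n\sum_{j=0}^{n-1}T^{j}\nu_0=\frac1n\sum_{j=0}^{n-1}\dg^{-j}(\f^{j})^*\nu_0,
\]
using $(\f^{j})^*=(\f^*)^{j}$ from the composition rule of Section~\ref{S:def}. Choose a subsequence with $\mu_{n_\ell}\to\mu$ weak* for some $\mu\in\mathcal P(X)$. For every $\phi\in C(X)$ one has $T\mu_n-\mu_n=\tfrac1n(T^{n}\nu_0-\nu_0)$, whence $|\langle T\mu_n-\mu_n,\phi\rangle|\le 2\|\phi\|_\infty/n\to0$; letting $\ell\to\infty$ and using weak* continuity of $T$ gives $\langle T\mu-\mu,\phi\rangle=0$ for all $\phi\in C(X)$, hence $T\mu=\mu$, i.e. $\f^*\mu=\dg\cdot\mu$. (Alternatively, $\mathcal P(X)$ is a nonempty compact convex subset of the space of signed Borel measures with the weak* topology and $T$ is continuous and affine, so the Schauder--Tychonoff fixed point theorem produces a fixed point directly.)

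The one step that is not purely formal is the continuity of $\Lambda\phi$ on all of $X$, i.e. that the fibrewise sums $\sum_{y:(y,x)\in\Gamma_i}\phi(y)$ extend continuously across the branch locus $X\setminus\Omega$; this is the classical fact recalled in Section~\ref{S:def}, and once it is in hand the remainder is the standard compactness/fixed-point machinery. Compactness of $X$ enters precisely to make $\mathcal P(X)$ weak* compact, and the conclusion genuinely fails without it, as the correspondence induced by $z\mapsto z+1$ on $\C$ illustrates.
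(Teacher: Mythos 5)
Your proposal is correct and follows essentially the same route as the paper: Cesàro averages of normalised pull-backs of a reference measure, weak* compactness of $\mathcal P(X)$, and the telescoping estimate $|\langle T\mu_n-\mu_n,\phi\rangle|\le 2\|\phi\|_\infty/n$. The only (harmless) differences are that the paper averages the pull-backs of a sequence $\nu_n$ rather than a fixed $\nu_0$, and that you make explicit the continuity of $x\mapsto\countsum_{y\in\f^{\dagger}(x)}\phi(y)$, which the paper uses implicitly when passing to the limit.
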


\begin{proof}
Let $\{\nu_n\}$ be a sequence of Borel probability measures. Consider
\[
\mu_n:= \frac{1}{n} \sum_{j=0}^{n-1} \frac{(\f^j)^*\nu_n}{\dg^j}.
\]
Since $X$ is compact, the sequence $\{\mu_n\}$ is tight. Thus there exists a convergent subsequence.
Let $\{\mu_{n_j}\}$ be such subsequence with the limit $\mu$. We next
show that $\mu$ is $\f^*$-invariant. For continuous $\psi$, we have
\begin{align}
\Big| \int_X \countsum_{y \in \f^{\dagger} (x)} \frac{\psi(y)}{\dg}&\,d\mu_{n_j}(x) - \int_X \psi(x) \,d\mu_{n_j}(x) \Big| \notag \\
&= \frac{1}{n_j} \Big| \int_X \sum_{i=0}^{n_j-1} 
\Big( \countsum_{y \in ({\f^{i+1}})^{\dagger} (x)} \frac{\psi(y)}{\dg^{i+1}} -\countsum_{y \in {\f^{i}}^{\dagger} (x)} \frac{\psi(y)}{\dg^{i}} \Big) \,d\nu_{n_j}(x)\Big| \notag \\
&= \frac{1}{n_j} \Big| \int_X \Big( \countsum_{y \in ({\f^{n_j}})^{\dagger} (x)} \frac{\psi(y)}{\dg^{n_j}}\Big) - \psi(x) \,d\nu_{n_j}(x)\Big| \notag \\
&\leq \frac{2}{n_j} \|\psi\|_{\infty}. \notag
\end{align}
As $j \to \infty$, the last expression tends to 0. Thus, we get
\[
\int_X \countsum_{y \in \f^{\dagger} (x)} \frac{\psi(y)}{\dg}\,d\mu(x) = \int_X \psi(x) \,d\mu(x).
\]
As $\psi$ is arbitrary, it follows that $\mu$ is $\f^*$-invariant.
\end{proof}

The next proposition
characterises ergodic measures among $\f^*$-invariant Borel probability measures.
As an application of this characterisation, we get the existence of an $\f^*$-invariant 
ergodic measure when $X$ is compact.

\begin{proposition}\label{P:ergodic_exist}
Let $\f$ and $X$ be as in Proposition~\ref{P:invar_exist}.
Then an 
$\f^*$-invariant measure $\mu$ is ergodic if and only if
$\mu$ {\bf{cannot}} be written as a strict convex combination of two distinct $\f^*$-invariant probability measures,
i.e.,
there do not exist $\f^*$-invariant Borel probability measures $\mu_1 \neq \mu_2$
and $0< \lambda <1$ such that 
$\mu= \lambda \mu_1 + (1- \lambda) \mu_2$.
\end{proposition}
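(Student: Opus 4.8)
The plan is to establish the two contrapositive implications: (i) if $\mu$ is not ergodic then $\mu$ is a strict convex combination of two distinct $\f^{*}$-invariant probability measures, and (ii) if such a decomposition exists then $\mu$ is not ergodic. The engine for both is a transfer operator. For a bounded Borel $\phi$ set $\mathcal{L}\phi(x):=\countsum_{y\in\f^{\dagger}(x)}\phi(y)$; this is a bounded Borel function of $x$ with $\|\mathcal{L}\phi\|_{\infty}\le\dg\,\|\phi\|_{\infty}$ and $\mathcal{L}1\equiv\dg$ (measurability follows, e.g., from the continuity statement of Section~\ref{S:def} by a monotone-class argument), and by \eqref{E:inv_int} applied to $|\phi|$ it carries $\mu$-null functions to $\mu$-null functions, so $\tfrac1\dg\mathcal{L}$ descends to a positive contraction of $L^{\infty}(\mu)$. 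Since $L^{1}(\mu)^{*}=L^{\infty}(\mu)$ and $\tfrac1\dg\int\mathcal{L}|\phi|\,d\mu=\int|\phi|\,d\mu$, for each $h\in L^{\infty}(\mu)$ there is a unique $\mathcal{P}h\in L^{\infty}(\mu)$ with $\int(\mathcal{P}h)\,\phi\,d\mu=\tfrac1\dg\int h\,\mathcal{L}\phi\,d\mu$ for all $\phi$; one checks that $\mathcal{P}$ is positive, $\mathcal{P}1=1$, $\int\mathcal{P}h\,d\mu=\int h\,d\mu$, and $\|\mathcal{P}h\|_{L^{1}(\mu)}\le\|h\|_{L^{1}(\mu)}$, and that for $h\ge0$ with $\int h\,d\mu=1$ the measure $h\mu$ is $\f^{*}$-invariant if and only if $\mathcal{P}h=h$ $\mu$-a.e. (unwind $\f^{*}(h\mu)=\dg\cdot h\mu$ against continuous test functions).

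For (i): given an almost invariant $B$ with $0<\mu(B)<1$, Lemma~\ref{L:alminv_complement} gives that $B^{c}$ is almost invariant too, say via Borel sets $B'\subseteq B$, $B''\subseteq B^{c}$ with $\f^{\dagger}(B')\subseteq B$, $\f^{\dagger}(B'')\subseteq B^{c}$, $\mu(B')=\mu(B)$, $\mu(B'')=\mu(B^{c})$. I claim $\mu_{1}:=\mu|_{B}/\mu(B)$ and $\mu_{2}:=\mu|_{B^{c}}/\mu(B^{c})$ are $\f^{*}$-invariant. For continuous $\phi\ge0$ put $\psi:=\chi_{B}\phi$ and apply \eqref{E:inv_int} to $\psi$, splitting $X=B'\sqcup(B\setminus B')\sqcup B''\sqcup(B^{c}\setminus B'')$: the two middle pieces are $\mu$-null, $\mathcal{L}\psi\equiv0$ on $B''$ (there $\f^{\dagger}(x)\subseteq B^{c}$), and $\mathcal{L}\psi=\mathcal{L}\phi$ on $B'$ (there $\f^{\dagger}(x)\subseteq B$), so $\tfrac1\dg\int_{B}\mathcal{L}\phi\,d\mu=\int_{B}\phi\,d\mu$; as $\phi$ is arbitrary, $\f^{*}(\mu|_{B})=\dg\cdot\mu|_{B}$, and likewise for $B^{c}$. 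Since $\mu_{1}(B)=1\ne0=\mu_{2}(B)$ and $\mu=\mu(B)\,\mu_{1}+(1-\mu(B))\,\mu_{2}$ with $\mu(B)\in(0,1)$, this is the required decomposition.

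For (ii): suppose $\mu=\lambda\mu_{1}+(1-\lambda)\mu_{2}$ with $0<\lambda<1$, $\mu_{1}\ne\mu_{2}$, both $\f^{*}$-invariant probability measures. Then $\mu_{1}\ll\mu$ and $\mu\ge\lambda\mu_{1}$, so $g:=d\mu_{1}/d\mu$ lies in $L^{\infty}(\mu)$ with $0\le g\le1/\lambda$, $\int g\,d\mu=1$, $g\ne1$ on a set of positive $\mu$-measure (else $\mu_{1}=\mu$, hence $\mu_{2}=\mu_{1}$), and $\mathcal{P}g=g$. For $c\ge0$ let $u:=g\wedge c$, $v:=(g-c)^{+}=g-u$: from $u\le c$ and positivity $\mathcal{P}u\le c$, so $\mathcal{P}v=g-\mathcal{P}u\ge(g-c)^{+}=v$, and $\int(\mathcal{P}v-v)\,d\mu=0$ with $\mathcal{P}v-v\ge0$ forces $\mathcal{P}v=v$ and $\mathcal{P}u=u$ $\mu$-a.e. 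Applying this at levels $c<c'$, dividing $\mathcal{P}((g\wedge c')-(g\wedge c))=(g\wedge c')-(g\wedge c)$ by $c'-c$ and letting $c'\downarrow c$ (the quotients lie in $[0,1]$, tend to $\chi_{\{g>c\}}$, and $\mathcal{P}$ is an $L^{1}(\mu)$-contraction), we get $\mathcal{P}\chi_{B}=\chi_{B}$ for $B:=\{g>c\}$. Pairing this with $\phi=\chi_{B^{c}}$ gives $\int_{B}\mathcal{L}\chi_{B^{c}}\,d\mu=0$, hence $\mathcal{L}\chi_{B^{c}}=0$ $\mu$-a.e.\ on $B$; since $\{\mathcal{L}\chi_{B^{c}}>0\}=\f(B^{c})$, this means $\mu(B\cap\f(B^{c}))=0$, so $B_{0}:=B\setminus\f(B^{c})$ (Borel by Result~\ref{R:bimeasurable}) satisfies $\mu(B_{0})=\mu(B)$ and $\f^{\dagger}(B_{0})\subseteq B$; thus $\{g>c\}$ is almost invariant for every $c\ge0$. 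Finally, using $\int g\,d\mu=1$, unless $g=1$ $\mu$-a.e.\ there is some $c$ with $\mu(\{g>c\})\in(0,1)$, and that set witnesses the non-ergodicity of $\mu$.

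The step I expect to be the crux is the passage from $\mathcal{P}g=g$ to $\mathcal{P}\chi_{\{g>c\}}=\chi_{\{g>c\}}$ via the truncation-and-limit argument, together with the measure-theoretic care in setting up $\mathcal{L}$ and $\mathcal{P}$ (in particular the measurability of $x\mapsto\mathcal{L}\phi(x)$); once these are in place, the rest is bookkeeping with almost invariant sets, Lemma~\ref{L:alminv_complement}, and the Borel-image property of Result~\ref{R:bimeasurable}.
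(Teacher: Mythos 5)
Your proof is correct. Direction (i) is essentially the paper's own argument: the same decomposition $\mu=\mu(B)\,\mu_1+\mu(B^c)\,\mu_2$ with $\mu_1,\mu_2$ the normalised restrictions of $\mu$ to an almost invariant set and its complement; the paper dismisses the $\f^*$-invariance of $\mu|_B$ as ``easy to see,'' and your splitting of \eqref{E:inv_int} over $B'$, $B\setminus B'$, $B''$, $B^c\setminus B''$ is precisely the computation that justifies it. Direction (ii) is where you genuinely diverge. The paper works bare-handed with the density $\varphi=d\mu_1/d\mu$ and the single set $B=\{\varphi<1\}$: it writes the invariance identity \eqref{E:inv_int} for $\chi_B$ against both $\mu$ and $\mu_1$, subtracts, and reads off $\mu(C_1)=0$ from the signs of $\varphi-1$ on $C_1=\f(B^c)\cap B$ and $C_2=\f(B)\cap B^c$; ergodicity then forces $\mu(B)\in\{0,1\}$, and $\int\varphi\,d\mu=1$ rules out $\mu(B)=1$, whence $\varphi\equiv 1$. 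You instead set up the dual Markov operator $\mathcal{P}$ on $L^\infty(\mu)$, note $\mathcal{P}g=g$ for $g=d\mu_1/d\mu$, and run the classical lattice argument for fixed densities: truncation plus positivity and preservation of the integral give $\mathcal{P}(g\wedge c)=g\wedge c$, and the difference-quotient limit gives $\mathcal{P}\chi_{\{g>c\}}=\chi_{\{g>c\}}$, which you convert into almost invariance of $\{g>c\}$ via $\mu(B\cap\f(B^c))=0$ and Result~\ref{R:bimeasurable}. Your route costs the construction of $\mathcal{P}$ (and the measurability bookkeeping for $x\mapsto\sum_{y\in\f^{\dagger}(x)}\phi(y)$, which you handle) but buys a reusable statement\,---\,every super-level set of a fixed density is almost invariant\,---\,and it proves the contrapositive directly rather than deducing $\mu_1=\mu_2$ under ergodicity; the paper's argument is shorter and entirely elementary but is tied to the particular set $\{\varphi<1\}$. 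Both proofs correctly confront the key asymmetry of correspondences (that $\f^{\dagger}(B)\subseteq B$ need not pass to complements) by producing a one-sided containment off a null set. I see no gaps.
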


\begin{proof}
Assume that $\mu$ is not ergodic. Therefore, there exist a Borel set $B$ such that $B$ is almost invariant
and $0<\mu(B)<1$. By Lemma~\ref{L:alminv_complement}, $B^c$ is almost invariant.
Given a Borel subset $A$ of $X$, let $\mu|_A$
denotes the restriction measure, defined by $\mu|_A(C):=\mu(A \cap C)$ for any Borel
subset $C$ of $X$. It is easy to see that
\[
\frac{1}{\mu(B)} \, \mu|_B \text{ \,\, and \,\, } \frac{1}{\mu(B^c)} \, \mu|_{B^c}
\]
are $\f^*$-invariant Borel probability measures. Moreover,
\[
\mu= \mu(B) \Big(\frac{1}{\mu(B)} \, \mu|_B \Big)+  \mu(B^c) \Big(\frac{1}{\mu(B^c)} \,\mu|_{B^c} \Big).
\]
Thus $\mu$ can be written as a strict convex combination of two distinct $\f^*$-invariant Borel 
probability measures.
\smallskip

Conversely, let $\mu$ be ergodic and assume that $\mu= \lambda \mu_1 + (1- \lambda) \mu_2$
for some $0< \lambda <1$.
Since $\lambda >0$, $\mu_1$ is absolutely continuous with respect to $\mu$.
Thus there is a positive function $\varphi$ such that, for all Borel subsets $A$,
\begin{equation}\label{E:Radon-Nikodym}
\mu_1(A)= \int_A \varphi\,d\mu.
\end{equation}
Let $B:=\{x \in X: \varphi(x)<1\}$. We now prove that $B$ is almost invariant
with respect to $\f$ and $\mu$. Consider the sets
\[
C_1:= \{x \in B: \f^{\dagger}(x) \cap B^c \neq \emptyset\} \text{   and   } 
C_2:=\{x \in B^c: \f^\dagger(x) \cap B \neq \emptyset\}.
\]
Observe that $C_1= \f(B^c) \cap B$ and $C_2= \f(B) \cap B^c$. Thus $C_1$ and $C_2$ are Borel sets.
We claim that $\mu(C_1)=0$.
Since $\mu$ is $\f^*$-invariant, by \eqref{E:inv_int}, we get
\[
\int_X \chi_B\,d\mu
= \int_X \chi_{B\setminus C_1}\,d\mu + \int_{C_1}  \countsum_{y \in \f^{\dagger}(x)} \frac{\chi_B(y)}{\dg}\,d\mu(x)
+ \int_{C_2} \countsum_{y \in \f^{\dagger}(x)} \frac{\chi_B(y)}{\dg}\,d\mu(x)
\]
Therefore,
\begin{equation}\label{E:mu_invariance}
\int_{C_1} 1\,d\mu 
= \int_{C_1}  \countsum_{y \in \f^{\dagger}(x)} \frac{\chi_B(y)}{\dg}\,d\mu(x)
+ \int_{C_2}  \countsum_{y \in \f^{\dagger}(x)} \frac{\chi_B(y)}{\dg}\,d\mu(x).
\end{equation}
Similarly, since $\mu_1$ is $\f^*$-invariant, we also have
\[
\int_{C_1} 1\,d\mu_1
= \int_{C_1} \countsum_{y \in \f^{\dagger}(x)} \frac{\chi_B(y)}{\dg}\,d\mu_1(x)
+ \int_{C_2} \countsum_{y \in \f^{\dagger}(x)} \frac{\chi_B(y)}{\dg}\,d\mu_1(x).
\]
By using \eqref{E:Radon-Nikodym}, we get
\begin{equation}\label{E:mu1_invariance}
\int_{C_1} \varphi\,d\mu 
= \int_{C_1} \Big( \countsum_{y \in \f^{\dagger}(x)} \frac{\chi_B(y)}{\dg}\Big)\varphi(x)\,d\mu(x)
+ \int_{C_2} \Big( \countsum_{y \in \f^{\dagger}(x)} \frac{\chi_B(y)}{\dg}\Big) \varphi(x)\,d\mu(x).
\end{equation}
Subtracting \eqref{E:mu_invariance} from \eqref{E:mu1_invariance} gives
\begin{align}\label{E:equality_ergodic_exist}
\int_{C_1} (\varphi - 1)\,d\mu 
= \int_{C_1} &\Big( \countsum_{y \in \f^{\dagger}(x)} \frac{\chi_B(y)}{\dg}\Big)(\varphi(x)-1)\,d\mu(x) \notag\\
&+ \int_{C_2} \Big( \countsum_{y \in \f^{\dagger}(x)} \frac{\chi_B(y)}{\dg}\Big) (\varphi(x)-1)\,d\mu(x).
\end{align}
Observe that, for every $x \in {C_1}$, $\decosum{y \in \f^{\dagger}(x)} \chi_B(y)/\dg < 1$.
Also,
$\varphi(x)-1 < 0$ for every $x \in {C_1}$, and $\varphi(x)-1 \geq 0$ for every $x \in C_2$.
Thus,
if $\mu(C_1)>0$, then \eqref{E:equality_ergodic_exist} does not hold.
This proves that $B$ is almost invariant
with respect to $\f$ and $\mu$.
Since the measure $\mu$ is ergodic, either $\mu(B)=0$ or $\mu(B)=1$. If $\mu(B)=1$ then,
by \eqref{E:Radon-Nikodym}, we get
\[
\mu_1(X)= \int_X \varphi\,d\mu < \mu(B)=1,
\]
which gives a contradiction to the fact that $\mu_1$ is a probability measure. Thus $\mu(B)=0$.
Similarly, it can be shown that $\mu(\{x \in X: \varphi(x)>1\})=0$. Therefore, $\varphi$ is $\mu$-almost everywhere
equal to 1. By \eqref{E:Radon-Nikodym}, $\mu = \mu_1$, and consequently, $\mu = \mu_1=\mu_2$.
Therefore, $\mu$ cannot be written as a strict convex combination of two 
distinct $\f^*$-invariant Borel probability measures.
\end{proof}

Let $\f$ be a holomorphic correspondence on a compact complex manifold $X$.
We now use Proposition~\ref{P:ergodic_exist} to show the existence of an $\f^*$-invariant ergodic measure.
Observe that the set $\mathcal{M}_\f$ of $\f^*$-invariant Borel probability measures is a non-empty compact convex set. 
Thus, by
the Krein--Milman theorem, it follows that the set of extreme points of $\mathcal{M}_\f$ is nonempty.
By Proposition~\ref{P:ergodic_exist},
it follows that an $F^*$-invariant ergodic measure always exists when $X$ is compact.
\medskip

\section{Examples}\label{S:examples}

Let $\f$ be a holomorphic correspondence on a complex manifold $X$.
In Section~\ref{S:existence}, we showed that an $\f^*$-invariant and an $\f^*$-invariant ergodic measure 
always exist
when the manifold $X$ is compact.
This section is devoted to explicit examples of $\f^*$-invariant ergodic measures.
In particular, we prove Theorem~\ref{T:ergodicity_DS}, which gives examples
of holomorphic correspondences with dynamically interesting
ergodic measures.
We begin by giving an example alluded to in Section~\ref{S:intro}.
If $\f$ is a holomorphic map, then the definition of the almost invariance of a Borel set $B$ with
respect to $\f$ and $\mu$ is equivalent to
$\mu(B \triangle \f^{\dagger}(B))=0$. In general, for holomorphic correspondences, this is not the case. We now
give an example of such holomorphic correspondence.
 
 \begin{example}\label{Ex:sym_diff}
An example of a holomorphic correspondence demonstrating that an almost invariant Borel set $B$
need not satisfy $\mu(B \triangle \f^{\dagger}(B))=0$.
\end{example}

\noindent{Consider a finitely generated rational semigroup $S$ generated by
$\mathcal G=\{z^2, z^2 / 2\}$. Let $\J(S)$ denote the Julia set of $S$.} Then (see \cite[Example~1]{boyd:imfgrs99}),
\[
\J(S)=\{z \in \C: 1 \leq |z| \leq 2\}.
\]
Let $\mu_{\mathcal G}$ be the measure constructed in \cite{boyd:imfgrs99}.
See \cite[Example~1]{boyd:imfgrs99}) for the explicit formula of the measure $\mu_{\mathcal G}$.
It turns out that $\mu_{\mathcal G}$ is the Dinh--Sibony measure associated with the holomorphic 
correspondence $\f_{\mathcal G}$ in \eqref{E:semi_corres} induced by $\mathcal G$. 
Since $\supp(\mu_{\mathcal G})=\J(S)$,
we have $\supp(\mu_{\mathcal G})=\{z \in \C: 1 \leq |z| \leq 2\}$.
Now consider a Borel set $B:=\{z \in \C: |z| > 2\}$. Since $\mu_{\mathcal G}(B)=0$, it follows that
$B$ is almost invariant with respect to $\f_{\mathcal G}$ and $\mu_{\mathcal G}$.
Observe that $\f_{\mathcal G}^{\dagger} (B)=\{z \in \C: |z| > \sqrt2\}$.
Since $\supp(\mu_{\mathcal G})=\{z \in \C: 1 \leq |z| \leq 2\}$, it follows that
$\mu_{\mathcal G}(\f_{\mathcal G}^{\dagger} (B) \triangle B) \neq 0$.
\hfill $\blacktriangleleft$
\medskip

We now prove that the Dinh--Sibony measure 
given by Result~\ref{R:DS} is ergodic as in Definition~\ref{D:ergodic}. This gives explicit examples 
of $\f^*$-invariant ergodic measures.
In particular, the measure $\mu_{\mathcal G}$ in Example~\ref{Ex:sym_diff} is ergodic.
Observe that, the measure $\mu_{\mathcal G}$ in Example~\ref{Ex:sym_diff} is not invariant
in the classical sense of ergodic theory.
\begin{proof}[The proof of Theorem~\ref{T:ergodicity_DS}]
As noted in Section~\ref{S:def},
there exists a pluripolar subset $E$ of $X$ such that for every $a \in X \setminus E$, we have
\[
  \dg^{-n} {(\f^n)}^* {\delta}_a\to \mu_{\f}
\] 
as $n \to \infty$. Equivalently, if $\phi$ is a continuous function on $X$, then
\begin{equation}\label{E:dual}
\frac{1}{\dg^{n}} \countsum_{y \in {\f^{n}}^{\dagger} (a)} \phi(y)\to \int_X \phi \,d\mu_{\f}
\end{equation}
as $n \to \infty$ for every $a \in X \setminus E$.
Let $\psi$ be $\mu_\f$-integrable.
Since $\mu_\f (E)=0$, by \eqref{E:dual} and by the dominated convergence theorem, we get
\begin{equation}\label{E:conver}
\int_X \bigg(\frac{1}{\dg^{n}} \countsum_{y \in {\f^{n}}^{\dagger} (x)} \phi(y) \bigg) \psi(x) \,d \mu_{\f}(x)
\to \int_X \phi \,d\mu_{\f}  \int_X \psi \,d\mu_{\f}
\end{equation}
as $n \to \infty$. If $\psi$ is bounded, then by the density of the continuous functions, \eqref{E:conver} holds 
if $\phi$ is $\mu_\f$-integrable.
\smallskip

Consider a Borel subset $B$ of $X$ that is
almost invariant with respect to $\f$ and $\mu$. Let $\phi= \chi_B$. 
By Lemma~\ref{L:alminv_complement}, $B$ and $B^c$ are almost invariant with respect to $\f^n$
and $\mu$ for every $n \in \N$. Thus it follows that, for all $n \in \N$,
\[
\frac{1}{\dg^{n}} \countsum_{y \in {\f^{n}}^{\dagger} (x)} \phi(y)= \phi(x)
\]
for $\mu_\f$-almost every $x\in X$. Let $\psi=1-\chi_B$. By \eqref{E:conver}, we get
$\mu_\f(B) \cdot \mu_\f(B^c)=0$. Thus we have either $\mu_\f(B)=0$ or $\mu_\f(B^c)=0$.
Therefore, the Dinh--Sibony measure $\mu_{\f}$ is ergodic as in Definition~\ref{D:ergodic}.
\end{proof}

\begin{remark}\label{Re:mero_corres}
Given a meromorphic correspondences $\f$ on a complex manifold $X$ and a Borel subset $B$ of $X$,
$\f^{\dagger}(B)$ need not be a Borel subset of $X$
(see \cite{Londhe:ridmcahs22} for examples).
This difficulty can be handled using tools from descriptive set theory.
Using these tools further as in \cite{Londhe:ridmcahs22},
Theorem~\ref{T:ergodic} can be proved when $\f$ is defined on a compact manifold
and for an $\f^*$ invariant measure that
puts zero mass on pluripolar sets. The above proof of Theorem~\ref{T:ergodicity_DS} 
also holds with appropriate changes for the meromorphic case.
\end{remark}

\begin{remark}
Using Theorem~2.10 in \cite{Dinh:ddppdcp05} and the technique as in the last paragraph of the proof of Theorem~\ref{T:ergodicity_DS}, 
it follows that the measures constructed in \cite{Dinh:ddppdcp05} are also ergodic, as in Definition~\ref{D:ergodic}.
\end{remark}

\section{Preliminary results}\label{S:prelem}

This section is devoted to proving certain lemmas that are essential for the proof of Theorem~\ref{T:ergodic}. 
We begin by proving a lemma that is fundamental
and will be used multiple times in this paper.
\begin{lemma}\label{L:imp}
Let $\f$ be a holomorphic correspondence of topological degree $\dg$ on a complex manifold
$X$ and $\mu$ an $\f^*$-invariant Borel probability measure on $X$. If $\phi: X \to \C$ in $L^1(\mu)$ is
{\bf{real valued}} and satisfies
\[
\countsum_{y \in \f^{\dagger}(x)} \frac{\phi(y)}{\dg}=\phi(x)
\]
for $\mu$-almost every $x \in X$, then the sets $\{x \in X : \phi(x) > t\}$ and $\{x \in X : \phi(x) < t\}$ are almost
invariant with respect to $\f$ and $\mu$
for any $t \in \R$.
\end{lemma}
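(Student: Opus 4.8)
The plan is to fix $t \in \R$ and set $A := \{x \in X : \phi(x) > t\}$, then produce the required Borel subset $A' \subseteq A$ witnessing almost invariance; the set $\{\phi < t\}$ is handled the same way (or by applying the result to $-\phi$, which satisfies the same averaging identity). First I would introduce the $\mu$-conull Borel set $G$ on which the pointwise identity $\dg^{-1}\decosum{y \in \f^{\dagger}(x)}\phi(y) = \phi(x)$ holds, and consider the ``bad'' set $C := \{x \in A : \f^{\dagger}(x) \not\subseteq A\}$, i.e. points of $A$ having a preimage-fiber point $y$ with $\phi(y) \leq t$. Note $C = A \cap \f(\{\phi \le t\})$ is Borel by the bimeasurability discussion (Result~\ref{R:bimeasurable}) in Section~\ref{S:def}. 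The natural candidate is $A' := A \setminus C$, which by construction satisfies $\f^{\dagger}(A') \subseteq A$; the whole content is then to show $\mu(C) = 0$, equivalently $\mu(A') = \mu(A)$.

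The key step is the measure estimate for $\mu(C)$. I would integrate the identity against $\chi_A$: applying \eqref{E:inv_int} with $\phi$ there replaced by $\chi_A$ — or more directly, using the pointwise identity on $G$ — we get for $\mu$-a.e.\ $x$ that $\dg^{-1}\decosum{y \in \f^{\dagger}(x)} \phi(y) = \phi(x) > t$ whenever $x \in A \cap G$. Now for $x \in C \cap G$, at least one term $y_0 \in \f^{\dagger}(x)$ has $\phi(y_0) \le t$, and there are $\dg$ terms total (counted with multiplicity), so the average $\dg^{-1}\decosum{y} \phi(y)$ being $> t$ forces $\decosum{y \ne y_0} \phi(y) > (\dg - 1) t$, i.e.\ at least one preimage $y_1$ satisfies $\phi(y_1) > t$. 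This alone is not quite a contradiction, so the real mechanism must be quantitative: I would instead exhaust $A$ by the sets $A_n := \{\phi > t + 1/n\}$ and argue on each, OR — cleaner — run the argument the way Lemma~\ref{L:alminv_complement} runs it, by splitting the integral $\int_X \chi_A \, d\mu = \int_X \phi'\, d\mu$ for a suitable truncation. The slick route: replace $\phi$ by $\psi := \min(\phi, t)$, which is in $L^1(\mu)$ and satisfies $\dg^{-1}\decosum{y \in \f^{\dagger}(x)}\psi(y) \le \min\!\big(\dg^{-1}\decosum_y \phi(y),\, t\big) = \min(\phi(x), t) = \psi(x)$ for $\mu$-a.e.\ $x$, with \emph{strict} inequality precisely on $C$ (there some $y$ has $\psi(y) = \phi(y)\vee t$ clipped strictly below, while $\phi(x) > t$ keeps $\psi(x) = t$). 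Integrating the inequality $\dg^{-1}\decosum_y \psi(y) \le \psi$ over $X$ and using $\f^*$-invariance \eqref{E:inv_int} (which gives equality of the two integrals) shows the integrand is $\mu$-a.e.\ an equality, hence $\mu(C) = 0$.

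Carrying this out: by \eqref{E:inv_int} applied to $\psi \in L^1(\mu)$ we have $\int_X \dg^{-1}\decosum_{y\in\f^{\dagger}(x)}\psi(y)\, d\mu(x) = \int_X \psi\, d\mu$; combined with the pointwise inequality $\dg^{-1}\decosum_{y}\psi(y) \le \psi(x)$ valid $\mu$-a.e., we conclude $\dg^{-1}\decosum_{y\in\f^{\dagger}(x)}\psi(y) = \psi(x)$ for $\mu$-a.e.\ $x$. On the other hand, for $x \in C \cap G$ pick $y_0 \in \f^{\dagger}(x)$ with $\phi(y_0) \le t$, so $\psi(y_0) = \phi(y_0)$; then since $\phi(x) > t$ and $\dg^{-1}\decosum_y \phi(y) = \phi(x)$, we get $\dg^{-1}\decosum_{y}\psi(y) \le \dg^{-1}\big(\decosum_{y \ne y_0}\phi(y) + \phi(y_0)\big) = \phi(x) > t = \psi(x)$ unless the inequality $\psi(y) \le \phi(y)$ is an equality for every term and $\phi(x) = t$ — but $\phi(x) > t$, contradiction; more carefully, $\psi(y) \le \min(\phi(y), t) \le \phi(y)$ for all $y$, and $\psi(x) = t < \phi(x) = \dg^{-1}\decosum_y \phi(y)$, so $\dg^{-1}\decosum_y \psi(y) < \psi(x)$ is impossible on a $\mu$-positive set, whence $\mu(C \cap G) = 0$, i.e.\ $\mu(C) = 0$. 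Then $A' = A \setminus C$ is the desired Borel set: $\f^{\dagger}(A') \subseteq A$ and $\mu(A') = \mu(A)$. The analogous statement for $\{\phi < t\}$ follows by applying the established case to $-\phi$. The main obstacle I anticipate is getting the truncation argument airtight — specifically verifying that $\psi = \min(\phi, t)$ really does satisfy the sub-averaging inequality $\mu$-a.e.\ with strict inequality exactly capturing $C$, and handling the bookkeeping of ``counted with multiplicity'' in the sums $\decosum$ so that the term $y_0$ can be isolated without disturbing the rest.
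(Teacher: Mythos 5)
Your truncation device is correct as far as it goes, and it is a genuinely different (and slicker) way to organize the computation than the paper's splitting of $\int\phi\cdot\chi_B$ over the sets $C_1,C_2$: with $\psi=\min(\phi,t)$ the pointwise sub-averaging inequality $\dg^{-1}\decosum{y\in\f^{\dagger}(x)}\psi(y)\le\psi(x)$ does hold $\mu$-a.e., and integrating it against \eqref{E:inv_int} does force equality $\mu$-a.e. The gap is in the final step, where you claim the equality fails on $C=\{x\in A:\f^{\dagger}(x)\not\subseteq A\}$. For $x\in C$ you have $\psi(x)=t$ and every preimage contributes $\psi(y)\le t$ to a sum of $\dg$ terms, so strictness of $\dg^{-1}\decosum{y}\psi(y)<t$ requires some preimage with $\psi(y_0)<t$, i.e.\ $\phi(y_0)<t$ \emph{strictly}. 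A point $x\in C$ whose offending preimages all satisfy $\phi(y_0)=t$ exactly gives $\decosum{y}\psi(y)=\dg\,t$ and yields no contradiction. What your argument actually establishes is that for $\mu$-a.e.\ $x\in A$ every preimage satisfies $\phi(y)\ge t$ (not $\phi(y)>t$), together with the fact that the \emph{closed} set $\{\phi\le t\}$ is almost invariant (run the same a.e.\ equality at points where $\phi(x)\le t$). The boundary level set $\{\phi=t\}$ may carry positive $\mu$-mass, and the truncation, being continuous across it, cannot see it; your fallback exhaustion by $A_n=\{\phi>t+1/n\}$ does not help, since the identical boundary problem recurs for each $A_n$.

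The missing ingredient is precisely Lemma~\ref{L:alminv_complement}: having shown that $\{\phi\le t\}$ is almost invariant, conclude that its complement $\{\phi>t\}$ is almost invariant. The proof of that lemma uses the characteristic function $\chi_{\{\phi\le t\}}$, whose unit jump across the boundary is exactly what rules out a positive-measure set of points of $A$ possessing a preimage on $\{\phi=t\}$. This is also how the paper's own proof is structured: its strict-sign argument first proves that the closed half-set $\{\phi\le 0\}$ is almost invariant (the strictness there lives on the open side, where $\phi(y)>0$), and only then is $\{\phi>0\}$ obtained by invoking Lemma~\ref{L:alminv_complement}. With that complementation step inserted after your truncation argument, your proof goes through.
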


\begin{proof}
We first prove that $B:=\{x \in X : \phi(x) > 0\}$ is almost invariant with respect to $\f$ and $\mu$.
Let
\[
C_1:=\{x \in B : \f^{\dagger}(x) \cap B^c \neq \emptyset\} \text{   and   }
C_2:=\{x \in B^c : \f^{\dagger}(x) \cap B \neq \emptyset\}.
\]
Take $\phi:= \phi \cdot \chi_{B}$ in \eqref{E:inv_int} to get
\begin{align}\label{E:equality}
\int_{X} \phi\cdot \chi_{B}\,d\mu= \int_{B \setminus C_1} \countsum_{y \in \f^{\dagger}(x)} \frac{\phi(y)}{\dg}\,d\mu&(x)
+\int_{C_1} \countsum_{y \in \f^{\dagger}(x)\,\cap B} \frac{\phi(y)}{\dg}\,d\mu(x) \notag \\
&+\int_{C_2} \countsum_{y \in \f^{\dagger}(x)\,\cap B} \frac{\phi(y)}{\dg}\,d\mu(x). \notag
\end{align}
Since $\phi$ satisfies $\decosum{y \in \f^{\dagger}(x)} \phi(y)/\dg=\phi(x)$ for $\mu$-almost every $x \in X$, we have
\begin{align}
\int_{B} \phi\,d\mu = \int_{B \setminus C_1} \phi\,d\mu 
+ \int_{C_1}& \Big(\phi(x) - \countsum_{y \in \f^{\dagger}(x)\,\cap B^c} \frac{\phi(y)}{\dg}\Big)\,d\mu(x) \notag\\
&+\int_{C_2} \countsum_{y \in \f^{\dagger}(x)\,\cap B} \frac{\phi(y)}{\dg}\,d\mu(x). \notag
\end{align}
Therefore, we get
\[
\int_{C_1} \countsum_{y \in \f^{\dagger}(x)\,\cap B^c} \frac{\phi(y)}{\dg}\,d\mu(x)
=\int_{C_2} \countsum_{y \in \f^{\dagger}(x)\,\cap B} \frac{\phi(y)}{\dg}\,d\mu(x).
\]
If $y \in \f^{\dagger}(x) \cap B^c$, then $\phi(y)\leq0$, and if $y \in \f^{\dagger}(x) \cap B$, then $\phi(y)>0$. 
Thus the above equality holds only if $\mu(C_2)=0$.
This proves that $B^c$ is almost invariant with respect to $\f$ and $\mu$.
By invoking Lemma~\ref{L:alminv_complement}, $B$ is almost invariant with respect to $\f$ and $\mu$.
\smallskip

Fix $t \in \R$. Observe that the function $\phi-t$ satisfies
\[
\countsum_{y \in \f^{\dagger}(x)} \frac{(\phi-t)(y)}{\dg}=\Big(\countsum_{y \in \f^{\dagger}(x)} \frac{\phi(y)}{\dg}\Big)-t=(\phi-t)(x)
\]
for $\mu$-almost every $x\in X$. Thus $\{x \in X : \phi(x)-t > 0\}= \{x \in X : \phi(x) > t\}$ is almost invariant.
Since $t$ is arbitrary, it follows that
$\{x \in X : \phi(x) > t\}$ is almost invariant for every $t \in \R$.
It only remains to show that $\{x \in X : \phi(x) < t\}$ is almost invariant. Note that
$\{x \in X : \phi(x) < t\}= \{x \in X : -\phi(x) > -t\}$ and $-\phi$ satisfies
\[
\countsum_{y \in \f^{\dagger}(x)} \frac{-\phi(y)}{\dg}=-\phi(x)
\]
for $\mu$-almost every $x\in X$. Thus $\{x \in X : \phi(x) < t\}$ is also almost invariant 
with respect to $\f$ and $\mu$ for every $t \in \R$.
\end{proof}

We now use the above lemma to characterise ergodic measures in terms of 
functions satisfying certain invariance property.

\begin{lemma}\label{L:ergodic_equi}
Let $\f$ be a holomorphic correspondence of topological degree $\dg$ on a complex manifold
$X$ and $\mu$ an $\f^*$-invariant Borel probability measure on $X$. The measure
$\mu$ is ergodic if and only if for $\phi: X \to \C$ in $L^1(\mu)$, 
\[
\countsum_{y \in \f^{\dagger}(x)} \frac{\phi(y)}{\dg}=\phi(x)
\]
for $\mu$-almost every $x\in X$ implies that $\phi$ is equal to a constant $\mu$-almost everywhere.
\end {lemma}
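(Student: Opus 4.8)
The plan is to prove the two implications of Lemma~\ref{L:ergodic_equi} separately, using Lemma~\ref{L:imp} as the main tool for one direction.

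For the ($\Leftarrow$) direction, suppose every $L^1(\mu)$ function $\phi$ satisfying the invariance identity $\decosum{y\in\f^\dagger(x)}\phi(y)/\dg=\phi(x)$ ($\mu$-a.e.) is $\mu$-a.e. constant. Let $B$ be almost invariant with respect to $\f$ and $\mu$; I want to show $\mu(B)\in\{0,1\}$. The natural candidate is $\phi:=\chi_B$. So the key sub-step is to verify that almost invariance of $B$ forces $\decosum{y\in\f^\dagger(x)}\chi_B(y)/\dg=\chi_B(x)$ for $\mu$-almost every $x$. One direction of this is easy: if $B'\subseteq B$ is the Borel set with $\f^\dagger(B')\subseteq B$ and $\mu(B')=\mu(B)$, then for $x\in B'$ every $y\in\f^\dagger(x)$ lies in $B$, so the sum equals $\dg/\dg=1=\chi_B(x)$ there, and $\mu(B\setminus B')=0$. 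For $x\in B^c$: by Lemma~\ref{L:alminv_complement}, $B^c$ is also almost invariant, so there is $B''\subseteq B^c$ with $\f^\dagger(B'')\subseteq B^c$ and $\mu(B'')=\mu(B^c)$; for $x\in B''$, every $y\in\f^\dagger(x)$ lies in $B^c$, so $\decosum{y\in\f^\dagger(x)}\chi_B(y)/\dg=0=\chi_B(x)$, and $\mu(B^c\setminus B'')=0$. Since $\mu((B\setminus B')\cup(B^c\setminus B''))=0$, the identity holds $\mu$-a.e. Now $\chi_B$ is $L^1(\mu)$ and satisfies the hypothesis, hence is $\mu$-a.e. constant, i.e. $\mu(B)\in\{0,1\}$; thus $\mu$ is ergodic.

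For the ($\Rightarrow$) direction, suppose $\mu$ is ergodic and let $\phi\in L^1(\mu)$ satisfy the invariance identity $\mu$-a.e. I first reduce to the real-valued case: write $\phi=\phi_1+i\phi_2$ with $\phi_1,\phi_2$ real; since the identity is linear and the sum is over $y$ repeated with multiplicity, both $\phi_1$ and $\phi_2$ satisfy it, so it suffices to treat real-valued $\phi$. Then by Lemma~\ref{L:imp}, for every $t\in\R$ the sets $\{\phi>t\}$ and $\{\phi<t\}$ are almost invariant with respect to $\f$ and $\mu$; by ergodicity each has $\mu$-measure $0$ or $1$. Define $c:=\sup\{t\in\R:\mu(\{\phi>t\})=1\}$ (this is finite because $\phi\in L^1(\mu)$ forces $\mu(\{\phi>t\})<1$ for large $t$ and $\mu(\{\phi>t\})=1$ fails for very negative $t$ only if... — actually the set of such $t$ is nonempty since $\mu(\{\phi>-n\})\to 1$, so some $n$ gives measure $1$). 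A standard monotone-limit argument then gives $\mu(\{\phi>c-1/n\})=1$ for all $n$, hence $\mu(\{\phi\geq c\})=1$, and $\mu(\{\phi>c+1/n\})=0$ for all $n$, hence $\mu(\{\phi>c\})=0$; therefore $\phi=c$ $\mu$-almost everywhere.

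**The main obstacle** I anticipate is the ($\Leftarrow$) direction — specifically, being careful that almost invariance of $B$ (which a priori only controls the behaviour of $\f^\dagger$ on a full-measure subset of $B$, and says nothing directly about $B^c$) does upgrade to the pointwise-a.e. identity $\decosum{y\in\f^\dagger(x)}\chi_B(y)/\dg=\chi_B(x)$. The resolution is exactly the invocation of Lemma~\ref{L:alminv_complement} to get almost invariance of $B^c$ as well, which is what lets us handle $x\in B^c$. The ($\Rightarrow$) direction is essentially immediate once Lemma~\ref{L:imp} is in hand, modulo the routine reduction to real-valued functions and the monotone-class sup argument to pin down the constant.
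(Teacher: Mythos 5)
Your proposal is correct and follows essentially the same route as the paper: the ($\Leftarrow$) direction tests the hypothesis on $\phi=\chi_B$, using Lemma~\ref{L:alminv_complement} to get the pointwise identity $\mu$-a.e.\ on $B^c$ as well as on $B$, and the ($\Rightarrow$) direction reduces to real-valued $\phi$ and applies Lemma~\ref{L:imp} together with ergodicity to show the super-level sets all have measure $0$ or $1$, pinning down the constant by a threshold argument. Your explicit monotone-limit justification that $\phi$ equals the threshold value $c$ almost everywhere is slightly more detailed than the paper's ``it easily follows,'' but the content is identical.
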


\begin{proof}
Consider a Borel set $B$ that is almost invariant with respect to $\f$ and $\mu$. 
Let $\phi:=\chi_{B}$, the 
characteristic function of $B$. Since $B$ is almost invariant, we have
\[
\countsum_{y \in \f^{\dagger}(x)} \frac{\chi_{B}(y)}{\dg}=1=\chi_{B}(x)
\]
for $\mu$-almost every $x \in B$.
By Lemma~\ref{L:alminv_complement}, $B^c$ is also almost
invariant with respect to $\f$ and $\mu$. Thus
\[
\countsum_{y \in \f^{\dagger}(x)} \frac{\chi_{B}(y)}{\dg}=0=\chi_{B}(x)
\]
for $\mu$-almost every $x \in B^c$.
Owing to the hypothesis, $\chi_{B}$ is equal to a constant $\mu$-almost everywhere.
Therefore, either $\mu(B)=0$ or $\mu(B)=1$.
\smallskip

Now, we prove the sufficiency part. 
We consider real and imaginary parts separately.
Observe that the real part of $\phi$, $\Re(\phi)$, satisfies
\[
\countsum_{y \in \f^{\dagger}(x)} \frac{\Re(\phi)(y)}{\dg}=\Re(\phi)(x)
\]
for $\mu$-almost every $x\in X$.
By Lemma~\ref{L:imp}, for any $t \in \R$, $A_t:=\{x \in X: \Re(\phi)(x) >t\}$ is almost invariant
with respect to $\f$ and $\mu$.
Since $\mu$ is ergodic, for any $t \in \R$, either $\mu(A_t)=0$ or $\mu(A_t)=1$. Let $t'$
be the minimum value of $t$ such that $\mu(A_{t'})=0$. It easily follows that $\Re(\phi)$ is equal to the constant $t'$
$\mu$-almost everywhere. Similarly, it can be shown that the imaginary part of $\phi$ is equal to 
a constant $\mu$-almost everywhere.
This proves that the function $\phi$ is equal to a constant $\mu$-almost everywhere.
\end{proof}

We next state a result that will be useful in proving a version of the maximal inequality in our setting.
A proof can be found in \cite[Theorem~1.16]{Walters:aiet82}.

\begin{result}\label{R:max_ineq}
Let $U: L^1(\nu) \to L^1(\nu)$ be a positive linear operator with $\|U\| \leq 1$. For $\psi \in L^1(\nu)$ 
a real valued function, define $\psi_0=0$, $\psi_n=\psi+U\psi+U^2\psi+\dots+U^{n-1}\psi$ for $n \geq 1$.
Let $N>0$ be an integer and $\Psi_N=\max_{0\leq n \leq N} \psi_n$. Then
\[
\int_{\{x:\,\Psi_N(x)>0\}} \psi\,d\nu \geq 0.
\]
\end{result}

We use the above result to prove a version of the maximal inequality in our setting.
Let $\f$ be a holomorphic correspondence and $\mu$ an $\f^*$-invariant measure.
We apply Result~\ref{R:max_ineq} to the operator $U_\f:L^1(\mu)\to L^1(\mu)$
defined by
\[
U_\f(\varphi)(x):=\countsum_{y \in \f^{\dagger}(x)} \frac{\varphi(y)}{\dg}.
\]
Observe that $U_\f$ is a positive linear operator with $\|U_\f\|=1$.

\begin{corollary}\label{C:max_ergodic}
Let $\f$ be a holomorphic correspondence of topological degree $\dg$ on a complex manifold
$X$ and $\mu$ an $\f^*$-invariant Borel probability measure on $X$.
If $\phi \in L^1({\mu})$ is real valued and
\[
E_{\alpha}:=\Big\{x \in X: \sup_{n \geq 1} \frac{1}{n} \sum_{j=0}^{n-1} \countsum_{y \in {\f^j}^{\dagger}(x)} \frac{\phi(y)}{\dg^j} > \alpha\Big\},
\]
then
\[
\int_{E_{\alpha} \cap A} \phi\,d\mu \geq \alpha\,\mu(E_{\alpha} \cap A),
\]
if $A$ is an almost invariant set with respect to $\f$ and $\mu$.
\end{corollary}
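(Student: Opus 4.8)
The plan is to prove the estimate first in the case $A = X$, where it follows directly from Result~\ref{R:max_ineq} applied to the averaging operator $U_{\f}$, and then to reduce the general case to this one by replacing $\phi$ with a function that agrees with $\phi$ on $A$ but is a suitable constant on $A^c$.

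\emph{The case $A = X$.} Here I would use that $\countsum_{y \in (\f^j)^{\dagger}(x)}\phi(y)/\dg^j = U_{\f}^j(\phi)(x)$ and that $U_{\f}(1) = 1$ (because $\f^{\dagger}(x)$ consists of $\dg$ points counted with multiplicity), so that for $\psi := \phi - \alpha \in L^1(\mu)$ one has $(\psi + U_{\f}\psi + \cdots + U_{\f}^{n-1}\psi)(x) = \sum_{j=0}^{n-1}\countsum_{y \in (\f^j)^{\dagger}(x)}\phi(y)/\dg^j - n\alpha$. Applying Result~\ref{R:max_ineq} with $\nu = \mu$, $U = U_{\f}$ (positive linear, $\|U_{\f}\| = 1$) and this $\psi$, and letting $\Psi_N$ be as in that result, the identity above shows that $\{x : \Psi_N(x) > 0\}$ increases to $E_{\alpha}$ as $N \to \infty$; thus $\int_{\{\Psi_N > 0\}}(\phi - \alpha)\,d\mu \geq 0$ for every $N$, and passing to the limit by dominated convergence (dominating function $|\phi - \alpha|$) gives $\int_{E_{\alpha}}\phi\,d\mu \geq \alpha\,\mu(E_{\alpha})$.

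\emph{The general case.} By Lemma~\ref{L:alminv_complement}, $A$ and $A^c$ are both almost invariant with respect to $\f^j$ and $\mu$ for every $j \in \N$. Intersecting the corresponding Borel witnesses over all $j$, I would produce Borel sets $A_0 \subseteq A$ and $A_1 \subseteq A^c$ with $\mu(A_0) = \mu(A)$, $\mu(A_1) = \mu(A^c)$ and $(\f^j)^{\dagger}(A_0) \subseteq A$, $(\f^j)^{\dagger}(A_1) \subseteq A^c$ for all $j \geq 0$. Then, fixing any real $M \leq \alpha$ and setting $g := \phi\chi_A + M\chi_{A^c} \in L^1(\mu)$, one checks that $U_{\f}^j(g) = U_{\f}^j(\phi)$ on $A_0$ (the iterates stay in $A$, where $g = \phi$) while $\frac1n\sum_{j=0}^{n-1}U_{\f}^j(g) \equiv M \leq \alpha$ on $A_1$ (the iterates stay in $A^c$, where $g \equiv M$). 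Hence the analogue $E_{\alpha}^{g}$ of $E_{\alpha}$ built from $g$ satisfies $E_{\alpha}^{g} \cap A_0 = E_{\alpha} \cap A_0$ and $E_{\alpha}^{g} \cap A_1 = \emptyset$, so $E_{\alpha}^{g} = E_{\alpha} \cap A$ up to a $\mu$-null set. Applying the case $A = X$ to $g$ and using $g = \phi$ on $A$ would then yield $\int_{E_{\alpha} \cap A}\phi\,d\mu = \int_{E_{\alpha}^{g}}g\,d\mu \geq \alpha\,\mu(E_{\alpha}^{g}) = \alpha\,\mu(E_{\alpha} \cap A)$.

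The main obstacle is getting the behaviour on $A^c$ right: the naive substitute $g = \phi\chi_A$ works only for $\alpha \geq 0$, because when $\alpha < 0$ the whole set $A^c$ (on which the ergodic averages of $\phi\chi_A$ vanish) is swept into $E_{\alpha}^{g}$ and inflates the right-hand side; pushing $A^c$ down by choosing a constant $M \leq \alpha$ there removes it from $E_{\alpha}^{g}$. A secondary point is that one must use almost invariance of $A$ with respect to \emph{every} iterate $\f^j$ simultaneously, which is why the witnesses have to be intersected over $j$.
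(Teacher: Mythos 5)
Your argument is correct, and for the first half (the case $A=X$) it coincides with the paper's: both identify $E_{\alpha}$ as the increasing union $\bigcup_{N}\{\Psi_N>0\}$ for $\psi=\phi-\alpha$ and invoke Result~\ref{R:max_ineq}. Where you genuinely diverge is the passage to a general almost invariant $A$. The paper keeps the function $\phi$ and changes the \emph{measure}: since $A$ is almost invariant, $\frac{1}{\mu(A)}\,\mu|_{A}$ is again $\f^*$-invariant (this was noted in the proof of Proposition~\ref{P:ergodic_exist}), so one simply reruns the maximal inequality with respect to that restricted measure and reads off $\int_{E_{\alpha}\cap A}\psi\,d\mu\geq 0$. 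You instead keep the measure and change the \emph{function}, replacing $\phi$ by $g=\phi\chi_{A}+M\chi_{A^c}$ with $M\leq\alpha$, after extracting uniform witnesses $A_0\subseteq A$, $A_1\subseteq A^c$ with $(\f^j)^{\dagger}(A_0)\subseteq A$ and $(\f^j)^{\dagger}(A_1)\subseteq A^c$ for all $j$ (a legitimate countable intersection of the witnesses supplied by Lemma~\ref{L:alminv_complement}); this forces $E_{\alpha}^{g}=E_{\alpha}\cap A$ up to a $\mu$-null set and reduces everything to the case $A=X$. Your observation that $M$ must be pushed down to $\leq\alpha$ (rather than taking $g=\phi\chi_A$) to keep $A^c$ out of $E_{\alpha}^{g}$ when $\alpha<0$ is exactly the right point of care. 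The trade-off: the paper's route is a one-line reduction but silently requires that $U_{\f}$ descends to a well-defined contraction on $L^1(\mu|_A)$ and that the restricted measure is indeed invariant; your route avoids introducing a second measure altogether and applies Result~\ref{R:max_ineq} only once for $(\mu,U_{\f})$, at the cost of the slightly longer bookkeeping with $A_0$, $A_1$, and $g$. Both are complete proofs.
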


\begin{proof}
Let $\psi=\phi-\alpha$.
In the notations of Result~\ref{R:max_ineq},
\[
E_{\alpha}= \bigcup_{N=0}^{\infty} \{x: \Psi_N(x) >0\}.
\]
By Result~\ref{R:max_ineq}, it follows that $\int_{E_{\alpha}} \psi  \,d\mu\geq 0$ and therefore
$\int_{E_{\alpha}} \phi \,d\mu \geq \alpha \mu(E_{\alpha})$. Now, if $A$ is an almost invariant set
with respect to $\f$ and $\mu$,
then we apply the same argument to $\psi=\phi-\alpha$ and the $\f^*$-invariant probability measure
$\frac{1}{\mu(A)}\, \mu|_A$, to get
\[
\int_{E_{\alpha} \cap A} \phi\,d\mu \geq \alpha\, \mu(E_{\alpha} \cap A).
\]
This finishes the proof.
\end{proof}

\section{The proof of Theorem~\ref{T:ergodic}}\label{S:ergodic}

Before starting the proof of Theorem~\ref{T:ergodic}, we recall a notation. Recall that
${\sum\nolimits}'$ denotes the sum with $y$'s, repeated with multiplicity.
A careful reader will observe that the proofs in Section~\ref{S:prelem} (and the proof of Theorem~\ref{T:ergodic})
are purely measure theoretic and the complex structure does not play any role in the proofs. 
Thus these results can be extended to more general multi-valued maps.
We are now ready for

\begin{proof}[The proof of Theorem~\ref{T:ergodic}]
Note that, by considering real and imaginary parts separately, it is enough to consider only real valued $\phi$.
Define, for $x \in X$,
\[
\phi'(x):=\liminf_{n\to \infty}\frac{1}{n}\sum_{j=0}^{n-1}\countsum_{y \in {{\f}^j}^{\dagger}(x)} \frac{\phi(y)}{\dg^{j}},
 \]
\[
\phi''(x):=\limsup_{n\to \infty}\frac{1}{n}\sum_{j=0}^{n-1}\countsum_{y \in {{\f}^j}^{\dagger}(x)} \frac{\phi(y)}{\dg^{j}}.
\]
We now prove that $\decosum{y \in \f^{\dagger}(x)}{\phi'(y)/\dg}=\phi'(x)$ and 
$\decosum{y \in \f^{\dagger}(x)} {\phi''(y)/\dg}=\phi''(x)$ hold for $\mu$-almost every $x\in X$. Observe that
\begin{equation}\label{E:liminf_limsup}
\frac{n+1}{n} \bigg(\frac{1}{n+1}\sum_{j=0}^{n}\countsum_{y \in {{\f}^j}^{\dagger}(x)} \frac{\phi(y)}{\dg^j}\bigg)
=\countsum_{y \in \f^{\dagger}(x)} \frac{1}{\dg}\bigg(\frac{1}{n}\sum_{j=0}^{n-1}\countsum_{z \in {F^j}^{\dagger}(y)} \frac{\phi(z)}{\dg^j} \bigg) + \frac{1}{n} \phi(x).
\end{equation}
By taking the limit along a subsequence for which the left-hand side of \eqref{E:liminf_limsup} 
converges to the liminf, gives us $\phi'(x) \geq \decosum{y \in \f^{\dagger}(x)} {\phi'(y)/\dg}$
holds for $\mu$-almost every $x\in X$. 
The limit along a subsequence for which the right-hand side of \eqref{E:liminf_limsup} 
converges to the liminf, gives us $\phi'(x) \leq \decosum{y \in \f^{\dagger}(x)} {\phi'(y)/\dg}$
holds for $\mu$-almost every $x\in X$.
A similar argument for $\phi''$, gives us the desired equalities.
\smallskip

We next prove that $\phi' = \phi''$ $\mu$-almost everywhere.
For rationals $\alpha > \beta$, define
\[
E_{\alpha, \beta}:=\{x \in X: \phi' (x) < \beta \text{ and } \phi''(x) >\alpha\}.
\]
Since $\decosum{y \in \f^{\dagger}(x)} \phi'(y)/\dg=\phi'(x)$ and 
$\decosum{y \in \f^{\dagger}(x)} {\phi''(y)/\dg}=\phi''(x)$ hold for $\mu$-almost every $x\in X$,
by Lemma~\ref{L:imp},
it follows that $\{x \in X: \phi'(x) < \beta\}$ and $\{x \in X: \phi''(x) >\alpha\}$ are almost invariant.
It is now easy to see that $E_{\alpha, \beta}$ is almost invariant. 
Now, we apply Corollary~\ref{C:max_ergodic} to the setting here. Observe that 
$E_{\alpha, \beta} \subseteq E_{\alpha}$,
where $E_{\alpha}$ is the set defined in Corollary~\ref{C:max_ergodic}.
Since $E_{\alpha, \beta}$ is almost invariant, we have
\[
\int_{E_{\alpha, \beta}} \phi\,d\mu \geq \alpha \mu(E_{\alpha, \beta}).
\]
Similar argument, by replacing $\phi$ by $-\phi$, shows that
\[
\int_{E_{\alpha, \beta}} \phi\,d\mu \leq \beta \mu(E_{\alpha, \beta}).
\]
The above two inequalities show that $\mu(E_{\alpha, \beta})=0$ for $\alpha > \beta$. Since 
$\{x \in X: \phi' (x) < \phi''(x)\}= \cup\,\{E_{\alpha, \beta}: \alpha > \beta \text{ with } \alpha, \beta \in \Q\}$,
it follows that $\phi' = \phi''$ $\mu$-almost everywhere. Set $\Phi:=\phi'$. Therefore,
\[
\lim_{n\to \infty}\frac{1}{n}\sum_{j=0}^{n-1}\countsum_{y \in {{\f}^j}^{\dagger}(x)} \frac{\phi(y)}{\dg^j}
= \Phi(x) \quad \text{for }\mu\text{-almost every }x\in X.
 \]
 \smallskip
 
Next, we show that $\Phi \in L^1({\mu})$, as a simple application of Fatou's Lemma. It is easy see that
\[
|\Phi(x)| \leq \liminf_{n \to \infty}\frac{1}{n}\sum_{j=0}^{n-1}\countsum_{y \in {{\f}^j}^{\dagger}(x)} \frac{|\phi(y)|}{\dg^j}.
\]
By Fatou's Lemma and, since $\mu$ is $F^*$-invariant, we get
\[
\int_X |\Phi|\,d\mu \leq 
\liminf_{n \to \infty} \int_X \frac{1}{n}\sum_{j=0}^{n-1}\countsum_{y \in {{\f}^j}^{\dagger}(x)} \frac{|\phi(y)|}{\dg^j}\,d\mu
= \int_X |\phi| \,d\mu.
\]
Since $\phi \in L^1(\mu)$, it follows that $\Phi \in L^1({\mu})$.
\smallskip

It only remains to show that $\int_X \Phi\,d\mu=\int_X \phi\,d\mu$. Define, for $k \in \Z$ and $n \geq 1$,
\[
D_k^n:=\{x \in X : \frac{k}{n} \leq \phi''(x) < \frac{k+1}{n}\}.
\]
Observe that, for $\epsilon >0$ small, we have $D_k^n \subseteq E_{(k/n)-\epsilon}$, where $E_{(k/n)-\epsilon}$
is the set defined in Corollary~\ref{C:max_ergodic}. Also, note that 
$D_k^n=\{x \in X :  \phi''(x) \geq k/n\} \cap \{x \in X :\phi''(x) < (k+1)/{n}\}$.
By Lemma~\ref{L:alminv_complement} and Lemma~\ref{L:imp}, it follows that $D_k^n$ is almost invariant.
By invoking Corollary~\ref{C:max_ergodic}, we see that
\[
\int_{D_k^n} \phi \,d\mu \geq \Big(\frac{k}{n} - \epsilon \Big) \mu (D_k^n).
\]
Since $\epsilon >0$ is arbitrary, we have
\[
\int_{D_k^n} \phi \,d\mu \geq \frac{k}{n} \, \mu (D_k^n).
\]
Now, by the definition of $D_k^n$ and the last inequality,
\[
\int_{D_k^n} \phi'' \,d\mu \leq \frac{k+1}{n} \, \mu (D_k^n) \leq \frac{1}{n} \, \mu (D_k^n) + \int_{D_k^n} \phi \,d\mu.
\]
Summing over $k$ gives us
\[
\int_{X} \phi'' \,d\mu \leq \frac{\mu (D_k^n)}{n}+ \int_{X} \phi \,d\mu.
\]
Since this holds for all $n \geq 1$, we have 
$
\int_{X} \phi'' \,d\mu \leq \int_{X} \phi \,d\mu.
$
Applying this to $-\phi$ instead of $\phi$, we get
$
\int_{X} \phi' \,d\mu \geq \int_{X} \phi \,d\mu.
$
Since $\phi'=\phi''$ $\mu$-almost everywhere, we have
\[
\int_{X} \Phi\,d\mu = \int_{X} \phi' \,d\mu 
=\int_{X} \phi \,d\mu.
\]
\smallskip

Lastly, if $\mu$ is ergodic as in Definition~\ref{D:ergodic}, then, by Lemma~\ref{L:ergodic_equi},
$\Phi$ is a constant $\mu$-almost everywhere
and the constant is precisely $\int_X \phi\,d\mu$.
\end{proof}

\section*{Acknowledgments}
\noindent{The author would like to thank Norm Levenberg and Gautam Bharali for their helpful comments 
on an earlier draft of this paper. The author also thanks the referee for their helpful
suggestions.}
\medskip


\begin{thebibliography}{88}

\bibitem{Ben:donav19}
Robert L. Benedetto, {\em Dynamics in One Non-Archimedean Variable},
Grad. Stud. Math., 198,
American Mathematical Society, Providence, RI, 2019, xviii+463 pp.

\bibitem{BhaSri:dhc16}
Gautam Bharali and Shrihari Sridharan, {\em The dynamics of holomorphic correspondences of $\mathbb P^1$: invariant 
measures and the normality set},
Complex Var. Elliptic Equ. {\bf 61} (2016), no.\,12, 1587--1613.

\bibitem{BhaSri:hcrfgrs17}
Gautam Bharali and Shrihari Sridharan, {\em Holomorphic correspondences related to finitely generated rational semigroups},
Internat. J. Math. {\bf 28} (2017), no.\,14,
25 pp.


\bibitem{boyd:imfgrs99}
David Boyd, {\em An invariant measure for finitely generated rational semigroups}, Complex Variables Theory Appl.
{\bf 39} (1999), no.\,3,
229--254.

\bibitem{brolin:isuirm65}
Hans Brolin, {\em Invariant sets under iteration of rational functions},
Ark. Mat. {\bf 6} (1965), 
103--144.

\bibitem{BullLomo:mqmwmg20}
Shaun Bullett and Luna Lomonaco, {\em Mating quadratic maps with the modular group II}, 
Invent. Math. {\bf 220} (2020), no.\,1, 185--210.

\bibitem{CloOtal:uecm01}
Laurent Clozel and Jean-Pierre Otal, {\em Unique ergodicit{\'e} des correspondances modulaires},
Essays on geometry and related topics, Vol. 1, 2, 205--216,
Monogr. Enseign. Math., 38, Enseignement Math., Geneva, 2001. 

\bibitem{CloUllmo:cmmi03}
Laurent Clozel and Emmanuel Ullmo, {\em Correspondances modulaires et mesures invariantes},
J. Reine Angew. Math. {\bf 558} (2003), 47--83.

\bibitem{Dinh:ddppdcp05}
Tien-Cuong Dinh, {\em Distribution des pr{\'e}images et des points p{\'e}riodiques d'une correspondance polynomiale},
Bull. Soc. Math. France {\bf 133} (2005), no.\,3, 363--394.

\bibitem{DinhKaufWu:dhcrs20}
Tien-Cuong Dinh, Lucas Kaufmann and Hao Wu, {\em Dynamics of holomorphic correspondences on Riemann surfaces},
Internat. J. Math. {\bf 31} (2020), no.\,5, 21 pp.

\bibitem{DinhKaufWu:prmdpv21}
Tien-Cuong Dinh, Lucas Kaufmann and Hao Wu, {\em Products of random matrices: a dynamical point of view}, 
Pure Appl. Math. Q. {\bf 17} (2021), no.\,3, 933--969.

\bibitem{DinhSibony:dvtma06}
Tien-Cuong Dinh and Nessim Sibony, {\em Distribution des valeurs de transformations m{\'e}romorphes et applications},
Comment. Math. Helv. {\bf 81} (2006), no.\,1, 221--258.

\bibitem{DinhSibony:discv10}
Tien-Cuong Dinh and Nessim Sibony, {\em Dynamics in several complex variables:
endomorphisms of projective spaces and polynomial-like mappings},
Holomorphic dynamical systems, 165--294, Lecture Notes in Math., 1998, Springer, Berlin, 2010.

\bibitem{FLM:imrm83}
Alexandre Freire, Artur Lopes and Ricardo Ma{\~n}{\'e}, {\em An invariant measure for rational maps},
Bol. Soc. Brasil. Mat. {\bf 14} (1983), no.\,1, 45--62.

\bibitem{ljubich:eprers83}
M. Ju. Ljubich, {\em Entropy properties of rational endomorphisms of the Riemann sphere},
Ergodic Theory Dynam. Systems {\bf 3} (1983), no.\,3, 351--385.

\bibitem{Londhe:ridmcahs22}
Mayuresh Londhe, {\em Recurence in the dynamics of meromorphic correspondences and holomorphic semigroups},
Indiana Univ. Math. J. {\bf 71} (2022), no.\,3, 1131--1154.

\bibitem{Mauldin:bf81}
R. Daniel Mauldin, {\em Bimeasurable functions},
Proc. Amer. Math. Soc. {\bf 83} (1981), no.\,2, 369--370.

\bibitem{Parra:emqmmg22}
V. Matus de la Parra, {\em Equidistribution for matings of quadratic maps with the modular group},
Ergodic Theory Dynam. Systems {\bf 44} (2024), no.\,3, 859--887.

\bibitem{Purves:bf66}
R. Purves, {\em Bimeasurable functions},
Fund. Math. {\bf 58} (1966), 
149--157.

\bibitem{Vu:emmskm20}
Duc-Viet Vu, {\em Equilibrium measures of meromorphic self-maps on non-K{\"a}hler manifolds},
Trans. Amer. Math. Soc. {\bf 373} (2020), no.\,3, 2229--2250.

\bibitem{Walters:aiet82}
Peter Walters, {\em An Introduction to Ergodic Theory},
Grad. Texts in Math., 79, Springer--Verlag, New York-Berlin, 1982.

\end{thebibliography}
\end{document}